\newcommand{\nbb}{\mathbb{N}}
\newcommand{\rbb}{\mathbb{R}}
\renewcommand{\L}{\mathcal{L}}
\newcommand{\W}{\mathcal{W}}
\newcommand{\Hcal}{\mathcal{H}}
\newcommand{\Pcal}{\mathcal{P}}
\newcommand{\B}{\mathcal{B}}
\newcommand{\la}{\langle}
\newcommand{\ra}{\rangle}
\renewcommand{\o}{\circ}
\newcommand{\ve}{\varepsilon}
\newcommand{\Psitwo}{\Phi}
\newcommand{\nt}{\notag}
\newcommand{\Ut}{\tilde{U}}
\newcommand{\ut}{\tilde{u}}
\newcommand{\vt}{\tilde{v}}
\newcommand{\ci}{\circ}
\newcommand{\vbar}{\overline{v}}
\newcommand{\ubar}{\overline{u}}
\newcommand{\dt}{\tilde{d}}
\newcommand{\dtbn}{\tilde{d}_{\beta,n}}
\newcommand{\mi}{\wedge}
\newcommand{\Tr}{\text{Tr}}
\renewcommand{\d}{\text{d}}
\newcommand{\domain}{\mathcal{O}}
\newcommand{\f}{\varphi}
\newcommand{\g}{\psi}
\newcommand{\grad}{\nabla}
\newcommand{\Dom}{\text{Dom}}
\newcommand{\ddt}{\tfrac{\d}{\d t}}
\newcommand{\E}{\mathbb{E}}
\renewcommand{\P}{\mathbb{P}}
\newcommand{\close}{\!\!\!}
\theoremstyle{plain}
\newtheorem{theorem}{Theorem}[section]
\newtheorem{lemma}[theorem]{Lemma}
\newtheorem{assumption}[theorem]{Assumption}
\newtheorem{proposition}[theorem]{Proposition}
\newtheorem{definition}[theorem]{Definition}
\newtheorem{remark}[theorem]{Remark}
\numberwithin{equation}{section}
\title{Polynomial mixing of a stochastic wave equation with dissipative damping}
\author{ Hung D.~Nguyen$^1$}
\address{$^1$ Department of Mathematics, University of Tennessee, Knoxville, Tennessee, USA}
\begin{document}

\begin{abstract}
We study the long time statistics of a class of semi--linear wave equations modeling the motions of a particle suspended in continuous media while being subjected to random perturbations via an additive Gaussian noise. By comparison with the nonlinear reaction settings, of which the solutions are known to possess geometric ergodicity, we find that, under the impact of nonlinear dissipative damping, the mixing rate is at least polynomial of any order. This relies on a combination of Lyapunov conditions, the contracting property of the Markov transition semigroup as well as the notion of $d$--small sets.
\end{abstract}

\maketitle

\section{Introduction} \label{sec:intro}

Let $\domain\subset \rbb^d$, $d\le 3$ be a bounded open domain with smooth boundary. We consider the following system in the unknown variable $u(t)=u(x,t):\domain\times[0,\infty)\to\rbb$
\begin{align} 
\partial_{tt} u(t)&=-\partial_t u(t)+\triangle u(t) - \f'(\partial_t u(t))+ Q\, \partial_t w(t),\label{eqn:wave:original}\\
u(t)\big|_{\partial\domain}&=0,\quad u(0)=u_0\in H^1_0(\domain),\quad \partial_t u(0)=v_0\in L^2(\domain), \nt
\end{align}
where $\f\in C^2(\rbb)$ is the nonlinearity satisfying certain polynomial growth conditions, $w(t)$ is a cylindrical Wiener process taking values in $L^2(\domain)$ and $Q:L^2(\domain)\to L^2(\domain)$ is a symmetric linear bounded map. By Newton's second law, equation~\eqref{eqn:wave:original} can be interpreted as the displacement field of a particle suspended in a randomly continuous medium, under the impact of the stochastic forcing term $Q\partial_t w$, the interactions with surrounding particles represented by the Laplacian, a constant damping force as well as a nonlinear damping force depending on the velocity \cite{barbu2007stochastic,kim2008stochastic,
pardoux1975equations}. 

In the absence of the nonlinear damping ($\f'\equiv 0$), large--time behaviors of equation~\eqref{eqn:wave:original}, including those settings with the appearance of a reaction term $f(u)$, are well studied \cite{barbu2002stochastic,cerrai2006smoluchowski,
cerrai2020convergence,martirosyan2014exponential,
nguyen2022small}. That is under suitable assumptions on the potentials, the following equation
\begin{align} 
\partial_{tt} u(t)&=-\partial_t u(t)+\triangle u(t) + f(u(t))+ Q\, \partial_t w(t),\label{eqn:wave:potential}
\end{align}
is known to admit a unique invariant measure which is exponentially attractive \cite{cerrai2006smoluchowski, martirosyan2014exponential,nguyen2022small}. On the other hand, although ergodicity of the full equation~\eqref{eqn:wave:original} was investigated in \cite{barbu2007stochastic,kim2008stochastic}, to the best of the author's knowledge, a mixing rate has not been addressed before. The goal of this paper is to make
progress on bridging this gap in the convergent speed toward equilibrium between~\eqref{eqn:wave:potential} with reaction and \eqref{eqn:wave:original} with nonlinear damping.

Historically, stochastic wave equations of the form~\eqref{eqn:wave:original} was studied as early as in the work of \cite{pardoux1975equations}. Under a broad class of the nonlinearity $\f$, the well--posedness was established in \cite{barbu2007stochastic} whereas finite--time explosions in the presence of multiplicative noise was proven in \cite{gao2013stochastic}. By employing the classical Krylov--Bogoliubov, one can construct invariant probability measures in $H^1_0(\domain)\times L^2(\domain)$ for~\eqref{eqn:wave:original} \cite{barbu2007stochastic, kim2008stochastic}. Unique ergodicity was proven in \cite{barbu2007stochastic} by showing that regardless of initial conditions, the solutions always converge to one another as time tends to infinity. In our work, we tackle the problem of mixing rate and prove the following result: given sufficient regular initial conditions, the dynamics is attractive toward the unique invariant probability measure at least polynomially fast in a suitable Wasserstein distance; see Theorem \ref{thm:polynomial-mixing} for a precise statement.

Despite of a rich literature on geometric ergodicity for stochastic partial differential equations (SPDEs) \cite{hairer2008spectral,hairer2011theory,
hairer2011asymptotic,martirosyan2014exponential,
mattingly2002exponential,
nguyen2022ergodicity,
nguyen2022small}, settings satisfying only subgeometric mixing rates seem to receive less attention, owing to the fact that the dynamics typically do not possess a strong dissipative mechanism. To mention a few examples, results in this direction were established for non--linear Schr\"odinger equations \cite{debussche2005ergodicity}, and complex Ginzburg--Landau equation \cite{nersesyan2008polynomial}. In both cases, Foias--Prodi type estimates together with coupling arguments are exploited to show that the solutions are polynomially attractive toward their unique invariant probability measures.  Traditionally, if the Markov transition process is strong Feller, one can employ the strategy of \emph{small} sets \cite{meyn2012markov} while making use of Lyapunov technique to measure the convergent rate in total variation metric. In our present work, since we do not expect~\eqref{eqn:wave:original} to satisfy such a strong Feller property, we draw upon the framework developed in \cite{hairer2011asymptotic} and later popularized in \cite{butkovsky2014subgeometric,butkovsky2020generalized,
durmus2016subgeometric,kulik2017ergodic,
kulik2015generalized} to overcome the difficulty. More specifically, the method is based on the concepts of \emph{d--small} sets and the \emph{contracting} property of the Markov kernels with respect to a suitably chosen distance $d$. Here, we note that the notion of \emph{d--small} sets is weaker than the mentioned \emph{small} sets \cite{butkovsky2014subgeometric,meyn2012markov}. Nevertheless, it turns out to be useful for the investigation of ergodicity of equation~\eqref{eqn:wave:original}. In addition to our work, this approach has found many applications in a variety of continuous time systems \cite{butkovsky2014subgeometric,
butkovsky2020generalized,glatt2021mixing,hairer2011asymptotic,
nguyen2022ergodicity}, as well as discrete time settings \cite{durmus2016subgeometric}. On the other hand, unlike the usual approach where the distance $d$ is bounded, the significant difference of our work from literature is the appearance of unbounded distances in the mixing results, which amounts to the lack of irreducibility. To circumvent the issue, we slightly modify the contracting argument in \cite{butkovsky2014subgeometric,kulik2017ergodic} tailored to~\eqref{eqn:wave:original}. Although the argument is specifically presented for stochastic wave equations, we hope the technique may be applicable for other systems where neither strong dissipation nor irreducibility are available.

Another important key ingredient in the mixing result is the Lyapunov functional that is needed to quantify the convergent rate. As demonstrated previously in \cite{barbu2007stochastic}, it is not known whether~\eqref{eqn:wave:original} possesses ``nice" exponential decaying estimates. Instead, given sufficient regular initial conditions $U_0$, we obtain a polynomial bound of the form
\begin{align} \label{ineq:Lyapunov}
P_tg(U_0)+\int_0^t P_s(g^\varepsilon)(U_0)\d s\le g(U_0)+t,\quad t\ge 0,
\end{align}
for a constant $\varepsilon\in(0,1)$, an appropriately chosen function $g$; see Lemma~\ref{lem:moment-bound:H^2} below. In \eqref{ineq:Lyapunov}, $P_t$ denotes the Markov transition semigroup associated with~\eqref{eqn:wave:original}. As a result, following the approach detailed in \cite{butkovsky2014subgeometric,kulik2017ergodic}, it can be shown that the returning time to $d$--small sets possesses polynomial moment bounds \cite{douc2009subgeometric,meyn2012markov}. This together with the fact that $P_t$ is non--expanding and bounded sets are $d$--small, we conclude the convergent rate is at least polynomial of any order; see Theorem~\ref{thm:polynomial-mixing}. It is worth to mention that the analysis in this note as well as in \cite{barbu2007stochastic,kim2008stochastic} are all valid thanks to the assumption that the nonlinear effect from $\f'$ is dominated by the constant damping force. As a trade--off on the nonlinear restrictions, we are able to obtain the Lyapunov estimate of~\eqref{ineq:Lyapunov}--typed, which is very convenient for the purpose of establishing the mixing results. Also, as a bypass product of~\eqref{ineq:Lyapunov}, we derive higher moment bounds of the invariant measure, thereby concluding the polynomial convergent rate of any power.

The rest of the paper is organized as follows: in Section~\ref{sec:result}, we introduce all the functional settings as well as the main assumptions on the nonlinear damping term and noise structure. We also state our main results in this section, including Proposition~\ref{prop:moment-bound} on the moment bounds of invariant measures and Theorem~\ref{thm:polynomial-mixing} on the polynomial ergodicity. In Section~\ref{sec:moment-bound}, we perform a priori energy estimates on the solutions that will be employed to prove the main results. In Section~\ref{sec:ergodicity}, we prove the main results while making use of the auxiliary estimates from Section~\ref{sec:moment-bound}.

\section{Assumptions and main results} \label{sec:result}

\subsection{Functional setting} \label{sec:functional-setting}
Given $\domain$ a smooth bounded domain in $\rbb^d$, let $L^p$, $1\le p\le \infty$, denote the usual space $L^p(\domain)$. In particular, when $p=2$, we denote by $H$ the Hilbert space $L^2(\domain)$ endowed with the inner product $\la\cdot,\cdot\ra_H$ and the induced norm $\|\cdot\|_H$.

Let $A$ be the realization of $-\triangle$ in $H$ endowed with the Dirichlet boundary condition and the domain $\Dom(A)=H^1_0(\domain)\cap H^2(\domain)$. It is well--known that there exists an orthonormal basis $\{e_k\}_{k\ge 1}$ in $H$ that diagonalizes $A$, i.e.,  
\begin{equation}\label{eqn:Ae_k=alpha_k.e_k}
Ae_k=\alpha_k e_k,
\end{equation}
for a sequence of positive numbers $\alpha_1\le \alpha_2\le\dots$ diverging to infinity. 

For each $r\in\rbb$, we denote
\begin{equation}
H^r=\Dom(A^{r/2}),
\end{equation}
endowed with the inner product
\begin{align*}
\la u_1,u_2\ra_{H^r}=\la A^{r/2}u_1,A^{r/2}u_2\ra_H.
\end{align*}
In view of~\eqref{eqn:Ae_k=alpha_k.e_k}, the inner product in $H^r$ may be rewritten as \cite{robinson2001infinite}
\begin{align*}
\la u_1,u_2\ra_{H^r}=\sum_{k\ge 1}\alpha_k^{r}\la u_1,e_k\ra_H\la u_2,e_k\ra_H.
\end{align*}
The induced norm in $H^r$ then is given by
\begin{align*}
\|u\|^2_{H^r}=\sum_{k\ge 1}\alpha_k^{r}|\la u_1,e_k\ra_H|^2.
\end{align*}
It is well--known that the embedding $H^{r_1}\subset H^{r_2}$ is compact for $r_1>r_2$.

For each $\beta\in\rbb$, let $\Hcal^\beta$ be the product space given by
\begin{equation} \label{form:H^beta.x.H^(beta-1)}
\Hcal^\beta = H^\beta \times H^{\beta-1},
\end{equation}
endowed by the norm
\begin{align*}
\|(u,v)\|^2_{\Hcal^\beta}=\|u\|^2_{H^\beta}+\|v\|^2_{H^{\beta-1}}.
\end{align*}
Let $\Pcal(\Hcal^\beta)$ be the space of Borel probability measures in $\Hcal^\beta$. Given a Borel measurable function $f:\Hcal^\beta\to [0,\infty)$, we denote $\Pcal_f(\Hcal^\beta)$ to be the subset of $\Pcal(\Hcal^\beta)$ consisting of elements that are integrable with respect to $f$. If $\nu\in \Pcal_f(\Hcal^\beta)$, we denote $\nu(f):=\int_{\Hcal^\beta}f(U)\nu(\d U)$.

\subsection{Assumptions} \label{sec:result:assumption}
In this subsection, we state the main assumptions on the nonlinearity and noise structure that will be employed throughout the rest of the paper.

Regarding the noise, we assume that $w(t)$ is a cylindrical Wiener process on $H$, whose decomposition is given by
$$w(t)=\sum_{k\ge 1}e_kB_k(t),$$
where $\{e_k\}_{k\ge 1}$ is the orthonormal basis of $H$ as in \eqref{eqn:Ae_k=alpha_k.e_k} and $\{B_k(t)\}_{k\ge 1}$ is a sequence of independent standard one--dimensional Brownian motions, each defined on the same stochastic basis $\mathcal{S}=(\Omega, \mathcal{F},\{\mathcal{F}_t\}_{t\ge 0},\P)$ \cite{karatzas2012brownian}. Concerning the linear operator $Q$, we impose the following nominal assumption \cite{barbu2007stochastic,barbu2002stochastic, bonaccorsi2012asymptotic,cerrai2020convergence,
da2014stochastic}:
\begin{assumption}\label{cond:Q} 
$Q:H\to H$ is a symmetric, non--negative, bounded linear map. We assume that $Q$ is diagonalizable by the orthonormal basis $\{e_k\}_{k\ge 1}$ given by \eqref{eqn:Ae_k=alpha_k.e_k}, i.e., there exists a non--negative sequence $\{\lambda_k\}_{k\ge 1}$ such that
\[
Qe_k=\lambda_k e_k,\qquad k\ge 1,\]
and that
\begin{equation} \label{cond:Q:H^4}
\emph{Tr}(QAQ)=\sum_{k\ge 1} \lambda_k^2\alpha_k<\infty,
\end{equation}
where $\{\alpha_k\}_{k\ge 1}$ are eigenvalues of $A$ as in \eqref{eqn:Ae_k=alpha_k.e_k}.

\end{assumption}

Here, we recall the notion $\Tr(f)=\sum_{k\ge 1}\la fe_k,e_k\ra_H$ for $f:H\to H$. We also remark that all of the constants $\lambda_k$ need not be strictly positive, as long as they satisfy the condition \eqref{cond:Q:H^4}. Concerning the nonlinearity $\f:\rbb\to\rbb$, we impose the following conditions: \cite{barbu2007stochastic,kim2008stochastic}

\begin{assumption} \label{cond:phi} $\f\in C^2$ satisfies $\f'(0)=0$.

\begin{enumerate}[noitemsep,topsep=0pt,wide=\parindent, label=\arabic*.,ref=\theassumption.\arabic*]

\item There exist positive constants $a_1,a_2,a_3$ and $\lambda$ such that for all $x\in\rbb$,
\begin{equation} \label{cond:phi:phi'(x)=O(x^lambda)} 
|\f'(x)|\le a_1(1+|x|^{\lambda}),
\end{equation}
and
\begin{equation} \label{cond:phi:x.phi'(x)<-x^(lambda+1)}
x\f'(x)\ge a_2|x|^{\lambda+1}-a_3.
\end{equation}

\item There exist a positive constant $a_4$ and a real constant $a_5$ such that the second derivative $\f''$ satisfies 
\begin{equation}  \label{cond:phi:phi''=O(x^(lambda-1))}
|\f''(x)| \le a_4(|x|^{\lambda-1}+1),\quad x\in\rbb,
\end{equation}
and
\begin{equation} \label{cond:phi:inf.phi''>-1}
\inf_{x\in\rbb}\f''(x)=a_5>-1.
\end{equation}
\item The constant $\lambda$ satisfies the following condition depending on the dimension $d$ 
\begin{align} \label{cond:lambda}
\lambda\in [1,3] \quad \text{if}\,\,\, d=1,\quad \lambda\in [1,3) \quad \text{if}\,\,\,  d=2,\quad\lambda\in [1,2] \quad \text{if}\,\,\,  d=3.
\end{align}

\end{enumerate}
\end{assumption}

\subsection{Main results} \label{sec:result:main-result}

Following \cite{barbu2007stochastic,barbu2002stochastic} and setting $v(t)=\partial_t u(t)$, we may recast~\eqref{eqn:wave:original} as
\begin{equation} \label{eqn:wave}
\begin{aligned}
\d u(t)&=v(t)\d t,\\
\d v(t)&=-Au(t)\d t-v(t)\d t-\f'(u(t))\d t+ Q\d w(t),\\
(u(0),v(0))&=(u_0,v_0)\in \Hcal^1.
\end{aligned}
\end{equation}
The generator associated with~\eqref{eqn:wave} is denoted by \cite{cerrai2020convergence}
\begin{equation} \label{form:L}
\L g(u,v)= \la D_u g, v\ra_H+\la D_v g,-Au-v(t)-\f'(u(t))\ra_H+ \Tr(D_{vv}gQQ^*),
\end{equation}
and is defined for all $g \in C^2(\Hcal^1;\rbb)$ such that $\Tr(D_{vv}gQQ^*)<\infty$.

Under the assumptions imposed in Section~\ref{sec:result:assumption}, it can be shown that~\eqref{eqn:wave} admits a unique mild solution $U(t)=(u(t),v(t))$ in the sense of \cite[Definition 2.1]{barbu2007stochastic}. The argument of the well--posedness can be derived via either the classical existence results for second order stochastic equations of hyperbolic type \cite[Theorem 2.1]{pardoux1975equations}, Yosida approximation \cite[Theorem 2.3]{barbu2007stochastic}, or Galerkin approximation \cite{kim2008stochastic}. As a consequence of the well--posedness, we can thus introduce the Markov transition probabilities of the solution $U(t)$ by
\begin{align*}
P_t(U_0,A):=\P(U(t;U_0)\in A),
\end{align*}
which are well--defined for $t\ge 0$, initial states $U_0\in\Hcal^1$ and Borel sets $A\subseteq \Hcal^1$. Letting $\B_b(\Hcal^1)$ denote the set of bounded Borel measurable functions $f:\Hcal^1 \rightarrow \rbb$, the associated Markov semigroup $P_t:\B_b(\Hcal^1)\to\B_b(\Hcal^1)$ is defined and denoted by
\begin{align}\label{form:P_t^m}
P_t f(U_0)=\E[f(U(t;U_0))], \quad f\in \B_b(\Hcal^1).
\end{align}
Let $\Pcal (\Hcal^1)$ be the space of probability measures in $\Hcal^1$. The push--forward of $\nu\in\Pcal (\Hcal^1)$ under the action of $P_t$ is denoted by $P_t\nu$ and defined as
\begin{align*}
P_t\nu (A)= \int_{\Hcal^1}P_t(U,A)\nu(\d U).
\end{align*}

We now turn to the topic of ergodicity for~\eqref{eqn:wave}. Recall that a probability measure $\nu\in \Pcal (\Hcal^1)$ is said to be {\it\textbf{invariant}} for the semigroup $P_t$ if 
\begin{align*}
P_t\nu = \nu.
\end{align*}
It is well--known that under Assumption~\ref{cond:Q} and Assumption~\ref{cond:phi}, $P_t$ admits a unique invariant probability measure $\nu$. The existence of $\nu$ was obtained via the classical Krylov--Bogoliubov tightness argument \cite{barbu2007stochastic,kim2008stochastic} applied to a sequence of time--averaged measures. On the other hand, the uniqueness of $\nu$ was established in~\cite[Theorem 4.1]{barbu2007stochastic} by showing that two solutions always converge to the same limit, regardless of initial conditions.

In Proposition~\ref{prop:moment-bound} below, we assert that the unique invariant probability measure $\nu$ satisfies moment bound of any order in higher regularity. 

\begin{proposition} \label{prop:moment-bound}
Let $\nu$ be the unique invariant measure of~\eqref{eqn:wave}. Then, for all $p>0$,
\begin{align} \label{ineq:moment-bound:nu}
\int_{\Hcal^1}\big( \|u\|_{H^2}+\|v\|_{H^1}+\|v\|^{\lambda+1}_{L^{\lambda+1}}\big)^p\nu(\emph{d}u,\emph{d}v)<\infty.
\end{align}
\end{proposition}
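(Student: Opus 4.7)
My plan is to derive the claimed moment bound for $\nu$ by feeding the higher--regularity Lyapunov estimate of Lemma~\ref{lem:moment-bound:H^2} into a Krylov--Bogoliubov time--averaging argument in $\Hcal^1$, combined with the uniqueness of $\nu$. Given any $p > 0$, I would apply Lemma~\ref{lem:moment-bound:H^2} with a suitable exponent (taking $q = p/\varepsilon$, where $\varepsilon \in (0,1)$ is as in~\eqref{ineq:Lyapunov}) to produce a functional
\begin{align*}
g(u,v) = \bigl(\|u\|_{H^2}^2 + \|v\|_{H^1}^2 + \|v\|_{L^{\lambda+1}}^{\lambda+1}\bigr)^q,
\end{align*}
finite on $\Hcal^2$, together with constants $C_1, C_2 > 0$ for which
\begin{align*}
P_t g(U_0) + \int_0^t P_s(g^\varepsilon)(U_0)\,\d s \le C_1 g(U_0) + C_2 t, \qquad U_0 \in \Hcal^2,\ t \ge 0.
\end{align*}

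Next, I fix any $U_0 \in \Hcal^2$ and study the Krylov--Bogoliubov time averages $\mu_t(\cdot) = \frac{1}{t}\int_0^t P_s(U_0,\cdot)\,\d s$ on $\Hcal^1$. Dividing the Lyapunov bound by $t$ yields $\mu_t(g^\varepsilon) \le C_2 + C_1 g(U_0)/t$. Since sub--level sets of $g^\varepsilon$ are bounded in $\Hcal^2$ and the embedding $\Hcal^2 \hookrightarrow \Hcal^1$ is compact, the family $\{\mu_t\}_{t\ge 1}$ is tight in $\Hcal^1$. Any weak limit is then an invariant probability measure for $P_t$ by a standard Krylov--Bogoliubov calculation, and by the uniqueness of~\cite[Theorem~4.1]{barbu2007stochastic} it must coincide with $\nu$; hence $\mu_t \to \nu$ weakly in $\Pcal(\Hcal^1)$ as $t \to \infty$.

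The functional $g^\varepsilon$ is lower semicontinuous on $\Hcal^1$: the norms $\|u\|_{H^2}^2 = \sum_k \alpha_k^2 |\la u,e_k\ra_H|^2$ and $\|v\|_{H^1}^2$ are suprema of their partial sums in the eigenbasis of $A$ (each continuous on $\Hcal^1$), $\|v\|_{L^{\lambda+1}}^{\lambda+1}$ is lower semicontinuous on $L^2$ by Fatou, and composition with $x \mapsto x^{q\varepsilon}$ preserves lower semicontinuity. Hence for $N\in\nbb$ the truncation $g^\varepsilon \wedge N$ is a bounded lower semicontinuous function on $\Hcal^1$, so Portmanteau's theorem gives
\begin{align*}
\int (g^\varepsilon \wedge N)\,\d\nu \;\le\; \liminf_{t \to \infty}\mu_t(g^\varepsilon \wedge N) \;\le\; \liminf_{t \to \infty} \Bigl(C_2 + \frac{C_1 g(U_0)}{t}\Bigr) = C_2.
\end{align*}
Monotone convergence in $N$ then yields $\nu(g^\varepsilon) \le C_2$, and since $g^\varepsilon = \bigl(\|u\|_{H^2}^2 + \|v\|_{H^1}^2 + \|v\|_{L^{\lambda+1}}^{\lambda+1}\bigr)^p$ with $p > 0$ arbitrary, elementary $L^p$ comparisons (with Jensen's inequality for $p < 1$) deliver~\eqref{ineq:moment-bound:nu}.

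The hard part is Lemma~\ref{lem:moment-bound:H^2} itself, which I would prove in Section~\ref{sec:moment-bound}: one must differentiate a carefully chosen energy functional along~\eqref{eqn:wave}, absorb the nonlinear damping term $\f'(v)$ against the linear damping using~\eqref{cond:phi:x.phi'(x)<-x^(lambda+1)} and~\eqref{cond:phi:inf.phi''>-1}, and control the It\^{o} correction via~\eqref{cond:Q:H^4}, all while keeping the dependence on $p$ (equivalently on $q$) explicit so that the argument above runs for every moment order. Once that lemma is in hand, the identification $\mu_t \to \nu$ through tightness plus uniqueness, together with the Portmanteau/truncation passage to $\nu$, are standard soft steps.
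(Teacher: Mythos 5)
Your proposal is correct and follows essentially the same route as the paper: the Lyapunov estimate of Lemma~\ref{lem:moment-bound:H^2} is averaged in time, passed to the Krylov--Bogoliubov limit $\nu$ via truncation, and the bound is released by monotone convergence. The only (harmless) differences are cosmetic: you spell out the tightness/uniqueness identification and the lower--semicontinuity/Portmanteau step that the paper treats as standard, and you work with $\bigl(\|u\|_{H^2}^2+\|v\|_{H^1}^2+\|v\|_{L^{\lambda+1}}^{\lambda+1}\bigr)^q$ directly rather than with the dominating functional $\Phi^n$ from \eqref{form:Phi}.
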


The moment bound of $\nu$ will be obtained via a priori energy estimates of the solution $U(t)$ in $\Hcal^2$, presented in Lemma~\ref{lem:moment-bound:H^2}. Later in Section~\ref{sec:ergodicity:proof-of-theorem}, the proof of Proposition~\ref{prop:moment-bound} will be supplied in detail. Also, the moment bounds on $\nu$ allows us to establish the polynomial convergence rate of any order, which we describe next.

With regard to the mixing rate of~\eqref{eqn:wave}, as mentioned in the introduction, we will draw upon the framework developed in \cite{hairer2006ergodicity,hairer2008spectral} and later popularized in \cite{butkovsky2014subgeometric,butkovsky2020generalized,
hairer2011theory,hairer2011asymptotic,kulik2017ergodic,kulik2015generalized}, tailored to our settings. Since the analysis involves Wasserstein distances, we briefly review related notions below.

Recall that a function $d:\Hcal^1\times\Hcal^1\to [0,\infty)$ is called \emph{distance--like} if it is symmetric, lower semi--continuous and $d(U,\Ut)=0$ if and only if $U=\Ut$; see \cite[Definition 4.3]{hairer2011asymptotic}. Let $\W_d$ be the Wasserstein distance in $\Pcal (\Hcal^1)$ associated with $d$ and given by
\begin{align} \label{form:W_d}
\W_d(\nu_1,\nu_2)=\inf \E\, d(X,Y),
\end{align}
where the infimum is taken over all bivariate random variables $(X,Y)$ such that $X\sim \nu_1$ and $Y\sim \nu_2$. On the one hand, we remark that since $d$ is only distance--like, $\W_d$ need not satisfy the usual triangle inequality. Nevertheless, $\W_d$ provides a reasonable way to measure distance between probability measures in the sense that $\W_d(\nu_1,\nu_2)=0$ if and only if $\nu_1=\nu_2$ \cite{butkovsky2020generalized, hairer2011asymptotic}. On the other hand, in case $d$ is indeed a metric, by the dual Kantorovich Theorem, $\W_{d}$ is equivalently defined as \cite[Theorem 5.10]{villani2008optimal}
\begin{align} \label{form:W_d:dual-Kantorovich}
\W_{d}(\nu_1,\nu_2)=\sup_{[f]_{\text{Lip},d}\leq 1}\Big|\int_{\Hcal^1}f(U)\nu_1(\d U)-\int_{\Hcal^1}f(U)\nu_2(\d U)\Big|,
\end{align}
where
\begin{align} \label{form:Lipschitz}
[f]_{\text{Lip},d}=\sup_{U\neq \Ut}\frac{|f(U)-f(\tilde{U})|}{d(U,\tilde{U})}.
\end{align}
To measure the convergence of $P_t$ toward $\nu$, we introduce the distance-like function
\begin{align} \label{form:d}
d(U,\Ut)=\|U-\Ut\|_{\Hcal^1}^2,\quad U,\Ut\in\Hcal^1.
\end{align}

As mentioned in Section~\ref{sec:intro}, since the dynamics is not strongly dissipative in $\Hcal^1$ \cite{barbu2007stochastic}, we are only able to obtain Lyapunov functionals in higher regularity. As a consequence, given any initial data in $\Hcal^2$, the solutions of \eqref{eqn:wave} are polynomially attractive toward $\nu$. This is summarized in the main result below:

\begin{theorem} \label{thm:polynomial-mixing}
Let $U\in\Hcal^2$ be given, $\nu$ be the unique invariant measure of~\eqref{eqn:wave} and $\lambda$ be the constant as in Assumption~\ref{cond:phi}. Then, there exists $T^*>0$ such that for all $n\ge 4$, $\gamma\in (0,1/\lambda)$, the following holds
\begin{align} \label{ineq:polynomial-mixing}
\W_d\big(P_t(U,\cdot),\nu\big) \le  C\big(1+\|U\|^{2n}_{\Hcal^2}\big)t^{-\frac{3(n-1+\gamma)}{4(1-\gamma)}},\quad t\ge T^*, \, U\in\Hcal^2,
\end{align}
where $\W_d$ is the Wasserstein distance associated with $d$ defined in~\eqref{form:d}, and $C>0$ is a positive constant independent of $U$ and $t$.
\end{theorem}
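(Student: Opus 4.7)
The plan is to place Theorem~\ref{thm:polynomial-mixing} into the subgeometric ergodicity framework of Hairer-Mattingly-Scheutzow and Butkovsky-Kulik, adapted as indicated in the introduction to handle the unbounded distance $d$ of~\eqref{form:d}. Three ingredients with respect to $d$ are required: (i) a polynomial drift inequality that yields polynomial moments of the return time of the process to a level set in $\Hcal^2$; (ii) a global non-expansion $\W_d(P_t\nu_1,P_t\nu_2)\le \W_d(\nu_1,\nu_2)$; and (iii) a strict $\W_d$-contraction of $P_{T^*}$ on bounded $\Hcal^2$-balls at some $T^*>0$. Ingredient (i) is read off from Lemma~\ref{lem:moment-bound:H^2}: for a suitable polynomial $g$ on $\Hcal^2$ (a power of $\|u\|_{H^2}^2+\|v\|_{H^1}^2+\|v\|_{L^{\lambda+1}}^{\lambda+1}$), the bound~\eqref{ineq:Lyapunov} with exponent $\varepsilon\in(0,1)$ converts, via standard subgeometric Foster-Lyapunov estimates \cite{douc2009subgeometric}, into polynomial moment bounds on return times to sublevel sets $\{g\le R\}$, with moment order growing with the power of $g$ and hence with $n$.

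Ingredients (ii) and (iii) both come from the synchronous coupling driven by a common cylindrical Wiener process $w$. Writing $(\delta u,\delta v)=(u-\tilde u,v-\tilde v)$, the difference solves a deterministic damped wave equation with nonlinear term $\f'(v)-\f'(\tilde v)$. Applying $\ddt$ to $E(t)=\|\delta u\|^2_{H^1}+\|\delta v\|^2_{H}=\|U(t)-\Ut(t)\|^2_{\Hcal^1}$ and using the monotonicity $\f''(x)\ge a_5>-1$ from~\eqref{cond:phi:inf.phi''>-1} yields $\ddt E\le -2(1+a_5)\|\delta v\|^2_H\le 0$, which proves (ii). For (iii) I would sharpen the functional to $F=E+2\eta\la \delta u,\delta v\ra_H$ for small $\eta>0$; the cross-term derivative contributes $-2\eta\|\delta u\|^2_{H^1}+2\eta\|\delta v\|^2_H$, at the price of the nonlinear term $-2\eta\la \delta u,\f'(v)-\f'(\tilde v)\ra_H$. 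This last term is absorbed using the polynomial growth~\eqref{cond:phi:phi''=O(x^(lambda-1))} combined with Sobolev embedding in the dimensions and range allowed by~\eqref{cond:lambda}, provided $U,\Ut$ remain in a bounded $\Hcal^2$-set -- which is exactly the case path-by-path on returns to the Lyapunov sublevel set. The resulting pointwise estimate $\ddt F\le -\kappa F$ (with $\kappa>0$ depending on the $\Hcal^2$-radius) gives $\E F(T^*)\le \alpha F(0)$ with $\alpha<1$ for $T^*$ large, hence $\W_d(P_{T^*}\delta_U,P_{T^*}\delta_{\Ut})\le \alpha\,d(U,\Ut)$ on bounded $\Hcal^2$-balls, the unbounded-distance analogue of $d$-smallness used in the modified framework.

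With (i)-(iii) in hand, I would conclude via the abstract subgeometric convergence theorem of \cite{butkovsky2014subgeometric,kulik2015generalized}: a coupling that is synchronous while both copies sit in the Lyapunov level set (where (iii) strictly contracts $\W_d$) and free otherwise (where (ii) preserves $\W_d$) yields a quantitative recursion for $\W_d(P_t\delta_U,\nu)$ whose rate is governed by the return-time tail. Polynomial return-time moments of order $n$ give the polynomial decay $t^{-3(n-1+\gamma)/(4(1-\gamma))}$ after optimizing the exponents, with prefactor $\lesssim 1+\|U\|^{2n}_{\Hcal^2}$ inherited from the chosen power of the Lyapunov function.

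The principal obstacle is ingredient (iii): because $d=\|\cdot\|^2_{\Hcal^1}$ is unbounded, the standard $d$-small-set notion, which demands a uniform $\W_d$-bound on the small set, is unavailable and must be replaced by the strict $d$-contraction above. The energy manipulation for $F$ itself is routine, but converting it into a Wasserstein statement requires a path-by-path $\Hcal^2$-control on both copies in the absorption of the nonlinear cross term; in the boundary regime $d=3$, $\lambda=2$ the Sobolev exponents are tight, and coupling the path $\Hcal^2$-control to the Lyapunov moment bound of Lemma~\ref{lem:moment-bound:H^2} is exactly what glues the deterministic contraction to the probabilistic statement used in the abstract theorem.
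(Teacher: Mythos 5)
Your overall architecture is the paper's: a polynomial Lyapunov drift in $\Hcal^2$ (Lemma~\ref{lem:moment-bound:H^2}), global non-expansion of $\W_d$ via the synchronous coupling and \eqref{cond:phi:inf.phi''>-1}, strict contraction on bounded $\Hcal^2$-sets ($d$-smallness in the sense of Definition~\ref{defn:contracting-d-small}), and the Butkovsky--Kulik machinery modified for an unbounded $d$. However, your justification of ingredient (iii) has a genuine gap. You claim the nonlinear cross term $-2\eta\la\delta u,\f'(v)-\f'(\tilde v)\ra_H$ is absorbed ``provided $U,\Ut$ remain in a bounded $\Hcal^2$-set --- which is exactly the case path-by-path,'' leading to a pointwise inequality $\ddt F\le-\kappa F$. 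This is not available: only the \emph{initial} data lie in the sublevel set $B_n^R$; over the interval $[0,T^*]$ the driven trajectories leave any fixed $\Hcal^2$-ball with positive probability, and there is no almost-sure uniform-in-time bound on $\|v(t)\|_{L^{\lambda+1}}$ that would make the absorption pathwise. What the paper does instead (Lemma~\ref{lem:moment-bound:ubar}, part 2) is bound the cross term by $\varepsilon^2 c\,\|\delta u\|^2_{H^1}\big(\|v\|^{\lambda+1}_{L^{\lambda+1}}+\|\tilde v\|^{\lambda+1}_{L^{\lambda+1}}+1\big)$ plus a piece absorbed by the damping, apply Gronwall with the \emph{random} exponent $\varepsilon^2c\int_0^t(\|v\|^{\lambda+1}_{L^{\lambda+1}}+\|\tilde v\|^{\lambda+1}_{L^{\lambda+1}})\,\d r$, and control the expectation of its exponential by the exponential-martingale estimate of Lemma~\ref{lem:moment-bound:int_0^t|v|}. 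The resulting factor $\exp\{\varepsilon^2C(\|U_0\|^2_{\Hcal^1}+\|\Ut_0\|^2_{\Hcal^1})-\tfrac18\varepsilon t\}$ is then made $<1$ by choosing $\varepsilon$ small relative to the radius $R$ and $t_0$ large; this $\varepsilon$-versus-$R$-versus-$t$ trade-off is the actual mechanism producing $d$-smallness, and your sketch omits it entirely.

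A secondary point: your conclusion ``after optimizing the exponents'' hides the step where the unboundedness of $d$ actually bites. In the one-step contraction for the weighted distance, the tail integral over $\{W_n(U_0)+W_n(\Ut_0)>N\}$ involves the unbounded integrand $\dtbn$ itself, and it is only because $d\le c\big(1+\beta\g_n(W_n(U)+W_n(\Ut))\big)^{1/2}$ for $n\ge4$ (estimate \eqref{ineq:d<psi(W)}) that this tail is controlled by $\g_n(N)^{-3/4}$ times a finite moment. This is the source of both the restriction $n\ge4$ in the statement and the factor $3/4$ in the exponent $\tfrac{3(n-1+\gamma)}{4(1-\gamma)}$; neither is recoverable from the standard bounded-distance version of the abstract theorem you cite.
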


Following the framework in \cite{butkovsky2014subgeometric,butkovsky2020generalized,
hairer2011asymptotic}, the argument of the proof of~\eqref{ineq:polynomial-mixing} combines the contracting property of $\W_d$ where $d$ is defined above in~\eqref{form:d}, the existences of Lyapunov functions as well as ``good" \emph{d--small} sets in $\Hcal^2$. However, we note that we are not able to directly employ \cite[Theorem 2.4]{butkovsky2014subgeometric} since the distance $d$ is unbounded. As a result, it is necessary to modify the argument found in \cite{butkovsky2014subgeometric} adapted to our setting. In Section~\ref{sec:ergodicity:proof-of-theorem}, we will supply the terminologies of contracting and $d-$small sets, as well as provide the proof of Theorem~\ref{thm:polynomial-mixing} in detail. 

It is also worth to point out that~\eqref{ineq:polynomial-mixing} alone is not sufficient to guarantee the existence and uniqueness of invariant probability measures in $\Pcal(\Hcal^1)$. Nevertheless, unique ergodicity follows from the estimates in Section~\ref{sec:moment-bound}, and was previously established in \cite{barbu2007stochastic,kim2008stochastic}.

\section{A priori moment estimates} \label{sec:moment-bound}

Throughout the rest of the paper, $c$ and $C$ denote generic positive constants that may change from line to line. The main parameters that they depend on will appear between parenthesis, e.g., $c(T,q)$ is a function of $T$ and $q$.

In this section, we collect useful moment estimates on the solutions of~\eqref{eqn:wave}. In Lemma~\ref{lem:moment-bound:int_0^t|v|}, stated and proven next, we assert an exponential moment bounds of $(u(t),v(t))$ in $\Hcal^1$. Lemma~\ref{lem:moment-bound:int_0^t|v|} will be particularly employed in the proof of Lemma~\ref{lem:moment-bound:ubar} below. In turn, Lemma~\ref{lem:moment-bound:ubar} will be invoked to prove the main results.

\begin{lemma} \label{lem:moment-bound:int_0^t|v|}
Given $(u_0,v_0)\in \Hcal^1$, let $(u(t),v(t))$ be the solution of~\eqref{eqn:wave} with initial condition $(u_0,v_0)$. Then, the following holds for all $\beta>0$ sufficiently small and $t\ge 0$ independent of $(u_0,v_0)$
\begin{align} \label{ineq:moment-bound:H^1:exponential}
&\E\exp \Big\{  \sup_{r\in[0,t]}\beta\big(\|u(r)\|^2_{H^1}+\|v(r)\|^2_H\big)+\beta \int_0^t \|v(r)\|^2_H+2a_2\|v(r)\|^{\lambda+1}_{L^{
\lambda+1}}\emph{d} r \Big\}   \nt \\
&\le \big( 1+4\beta\emph{Tr}(QQ^*)\big)\exp\Big\{\beta\big(\|u_0\|^2_{H^1}+\|v_0\|^2_H\big)+\beta\big(\emph{Tr}(QQ^*)+2a_3|\domain|\big)t\Big\}.
\end{align}
In the above, $a_2,a_3,\lambda$ are as in~\eqref{cond:phi:x.phi'(x)<-x^(lambda+1)} and $|\domain|$ denotes the volume of $\domain$.
\end{lemma}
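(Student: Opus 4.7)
The plan is to combine an exponential Lyapunov construction with Doob's $L^{2}$ maximal inequality. I would start by applying It\^o's formula to $E(t):=\|u(t)\|_{H^{1}}^{2}+\|v(t)\|_{H}^{2}$. Using the Hamiltonian cancellation $\la Au,v\ra_{H}-\la v,Au\ra_{H}=0$ and the dissipation bound from \eqref{cond:phi:x.phi'(x)<-x^(lambda+1)} applied pointwise to $v\f'(v)$, this yields
\[
dE(t)\le\bigl[-2\|v(t)\|_{H}^{2}-2a_{2}\|v(t)\|_{L^{\lambda+1}}^{\lambda+1}+C_{0}\bigr]\,dt+2\la v(t),Q\,dw(t)\ra_{H},
\]
where $C_{0}:=\Tr(QQ^{*})+2a_{3}|\domain|$. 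The martingale part has quadratic variation bounded by $4\Tr(QQ^{*})\|v(t)\|_{H}^{2}\,dt$ via the pointwise estimate $\|Qv\|_{H}^{2}=\sum_{k}\lambda_{k}^{2}\la v,e_{k}\ra_{H}^{2}\le\Tr(QQ^{*})\|v\|_{H}^{2}$, which follows from $\|Q\|_{\mathrm{op}}^{2}\le\Tr(QQ^{*})$ under the joint diagonalization.

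Next I would set $g(v):=\|v\|_{H}^{2}+2a_{2}\|v\|_{L^{\lambda+1}}^{\lambda+1}$ and introduce the exponential functional
\[
\Psi(t):=\exp\!\Bigl(\beta E(t)-\beta C_{0}t+\beta\!\int_{0}^{t}g(v(s))\,ds\Bigr).
\]
A direct It\^o calculation, combining the energy drift with the extra $\beta g(v)\,dt$ term and the It\^o correction $2\beta^{2}\|Qv\|_{H}^{2}\,dt$, yields
\[
d\Psi(t)\le -\beta\bigl(1-2\beta\Tr(QQ^{*})\bigr)\Psi(t)\|v(t)\|_{H}^{2}\,dt+2\beta\Psi(t)\la v(t),Q\,dw(t)\ra_{H},
\]
so that for $\beta$ in a suitable range (roughly $\beta\lesssim 1/\Tr(QQ^{*})$), $\Psi$ is a non-negative local supermartingale. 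Writing the associated Doob--Meyer decomposition $\Psi(t)=\Psi(0)-A(t)+M(t)$, with $A$ non-negative non-decreasing and $M$ a local martingale satisfying $d[M,M]_{t}\le 4\beta^{2}\Tr(QQ^{*})\Psi(t)^{2}\|v(t)\|_{H}^{2}\,dt$, gives $\sup_{r\le t}\Psi(r)\le\Psi(0)+\sup_{r\le t}M(r)$.

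Doob's $L^{2}$ maximal inequality then produces $\E\sup_{r\le t}M(r)^{2}\le 4\E[M,M]_{t}$, and the integrand $\Psi^{2}\|v\|_{H}^{2}$ is controlled by running the same supermartingale argument for $\Psi^{2}$ (which closes under a slightly stronger smallness condition on $\beta$): the Doob--Meyer decomposition of $\Psi^{2}$ yields $\E\int_{0}^{t}\Psi(s)^{2}\|v(s)\|_{H}^{2}\,ds\lesssim\Psi(0)^{2}/\beta$. Combining the two estimates, the factor $4$ inherent to Doob's $L^{2}$ inequality multiplied by $\Tr(QQ^{*})$ from the bound on $\|Qv\|_{H}^{2}$ produces the advertised prefactor $1+4\beta\Tr(QQ^{*})$. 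To translate back to the target quantity, I would use $\log\Psi(r)=\beta E(r)-\beta C_{0}r+\beta\!\int_{0}^{r}g\,ds$ and evaluate separately at $r=r^{*}:=\operatorname*{arg\,max}_{r\le t}E(r)$ and at $r=t$ to obtain
\[
\sup_{r\le t}\beta E(r)+\beta\!\int_{0}^{t}g(v)\,ds\le 2\sup_{r\le t}\log\Psi(r)+2\beta C_{0}t,
\]
so that exponentiating and inserting the bound on $\E\sup\Psi$ yields \eqref{ineq:moment-bound:H^1:exponential}.

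The main obstacle is the constant-tracking. One must keep $\beta$ small enough for the nested supermartingale constructions (for $\Psi$ and $\Psi^{2}$) to close, absorb the It\^o correction $2\beta^{2}\Tr(QQ^{*})\|v\|_{H}^{2}$ against the dissipation $2\beta\|v\|_{H}^{2}$, and coordinate all inequalities so that the final multiplicative constant collapses cleanly to $1+4\beta\Tr(QQ^{*})$. Localization of the local martingale $M$ is standard, via stopping times $\tau_{R}:=\inf\{r:E(r)\ge R\}$ followed by Fatou's lemma as $R\to\infty$.
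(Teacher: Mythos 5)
Your opening coincides with the paper's: It\^o's formula applied to $E(t)=\|u(t)\|^2_{H^1}+\|v(t)\|^2_H$, the damping condition~\eqref{cond:phi:x.phi'(x)<-x^(lambda+1)} giving $-2\la \f'(v),v\ra_H\le -2a_2\|v\|^{\lambda+1}_{L^{\lambda+1}}+2a_3|\domain|$, and the quadratic variation bound $\d\la M\ra(t)\le 4\beta^2\Tr(QQ^*)\|v(t)\|^2_H\,\d t$. Where you diverge is in how the martingale is controlled. The paper sacrifices half of the dissipation, $-\beta\int_0^t\|v\|^2_H\,\d r\le -\tfrac12\cdot\tfrac{1}{2\beta\Tr(QQ^*)}\la M\ra(t)$, keeps the compensated quantity $M(t)-\tfrac12\gamma\la M\ra(t)$ with $\gamma=1/(2\beta\Tr(QQ^*))$ inside the exponential, and uses the exponential martingale inequality $\P(\sup_s[M(s)-\tfrac12\gamma\la M\ra(s)]\ge R)\le e^{-\gamma R}$; integrating the tail gives $\E\exp\{\sup_s[M(s)-\tfrac12\gamma\la M\ra(s)]\}\le 1+\tfrac{1}{\gamma-1}\le 1+4\beta\Tr(QQ^*)$, which is precisely the advertised prefactor, with the exponent $\beta E(0)+\beta(\Tr(QQ^*)+2a_3|\domain|)t$ appearing un-doubled.

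Your Doob-$L^2$ route has two concrete defects. First, the prefactor: from $\E\sup_{r\le t}M(r)^2\le 4\E[M]_t\lesssim \beta\Tr(QQ^*)\Psi(0)^2$ you can only reach $\E\sup_{r\le t}M(r)$ via Cauchy--Schwarz, which yields $\E\sup\Psi\le (1+C\sqrt{\beta})\,\Psi(0)$; the claim that ``the factor $4$ in Doob times $\Tr(QQ^*)$'' produces $1+4\beta\Tr(QQ^*)$ is not substantiated and the square root cannot be avoided along this path. Second, and more seriously, your closing inequality $\sup_{r\le t}\beta E(r)+\beta\int_0^t g\le 2\sup_{r\le t}\log\Psi(r)+2\beta C_0 t$, once exponentiated, requires a bound on $\E\bigl(\sup_{r\le t}\Psi(r)\bigr)^2$, not on $\E\sup_{r\le t}\Psi(r)$, and it doubles every constant in the exponent; estimating $\E(\sup\Psi)^2\le 2\Psi(0)^2+2\E\sup M^2$ then gives a leading constant of order $2$ rather than $1+O(\beta)$. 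The best your argument delivers is $C\exp\{2\beta(\|u_0\|^2_{H^1}+\|v_0\|^2_H)+2\beta C_0 t\}$ with $C\approx 2$ --- qualitatively of the right form, and in fact sufficient for the downstream uses in Lemma~\ref{lem:moment-bound:ubar} and the $d$-small-set construction, but not the stated estimate~\eqref{ineq:moment-bound:H^1:exponential}. The repair is to drop the detour through $\sup\Psi$ and Doob: bound $\sup_{r\le t}\beta E(r)+\beta\int_0^t(\|v\|^2_H+2a_2\|v\|^{\lambda+1}_{L^{\lambda+1}})\,\d r\le \beta E(0)+\beta C_0 t+\sup_s[M(s)-\tfrac12\gamma\la M\ra(s)]$ pathwise and apply the exponential martingale inequality to the last term.
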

\begin{proof}
From~\eqref{eqn:wave}, by It\^o's formula, we have
\begin{align*}
&\d\big( \|u(t)\|^2_{H^1}+\|v(t)\|^2_H\big) \\
&= -2\|v(t)\|^2_H\d t-2\la \f'(v(t)), v(t)\ra_H\d t+2\la v(t),Q\d w(t)\ra_H+\Tr(QQ^*)\d t.
\end{align*}
In view of condition~\eqref{cond:phi:x.phi'(x)<-x^(lambda+1)}, 
\begin{align*}
-2\la \f'(v(t)), v(t)\ra_H \le -2a_2\|v(t)\|^{\lambda+1}_{L^{
\lambda+1}}+2a_3|\domain|,
\end{align*}
where $a_2,\,a_3$ are as in~\eqref{cond:phi:x.phi'(x)<-x^(lambda+1)}. It follows that
\begin{align*}
&\d \big(\|u(t)\|^2_{H^1}+\|v(t)\|^2_H\big) \\
&\le -2\|v(t)\|^2_H\d t-2a_2\|v(t)\|^{\lambda+1}_{L^{
\lambda+1}}\d t+2\la v(t),Q\d w(t)\ra_H+\big(\Tr(QQ^*)+2a_3|\domain|\big)\d t.
\end{align*}
In particular, for all $\beta>0$, it holds that
\begin{align*}
&\beta\big(\|u(t)\|^2_{H^1}+\|v(t)\|^2_H\big)-\beta\big(\|u_0\|^2_{H^1}+\|v_0\|^2_H\big)\\
&\le -\beta \int_0^t 2\|v(r)\|^2_H+2a_2\|v(r)\|^{\lambda+1}_{L^{
\lambda+1}}\d r+M(t)+\beta\big(\Tr(QQ^*)+2a_3|\domain|\big)t,
\end{align*}
where the Martingale term $M(t)$ is defined as
\begin{align*}
M(t)=\int_0^t 2\beta\la v(r),Q\d w(r)\ra_H.
\end{align*}
Turning to~\eqref{ineq:moment-bound:H^1:exponential}, we aim to employ the exponential Martingale inequality applied to $M(t)$. To see this, note that the quadratic variation process $\la M\ra(t)$ satisfies
\begin{align*}
\d \la M\ra(t) =4\beta^2\|Qv(t)\|^2_H\d t\le 4\beta^2\Tr(QQ^*)\|v(t)\|^2_H\d t,
\end{align*}
whence
\begin{align*}
-\beta\int_0^t \|v(r)\|^2_H\d r= -\tfrac{1}{2}\cdot \tfrac{1}{2\beta\Tr(QQ^*)} \int_0^t 4\beta^2\Tr(QQ^*)\|v(r)\|^2_H\d r\le -\tfrac{1}{2}\cdot \tfrac{1}{2\beta\Tr(QQ^*)}\la M\ra(t).
\end{align*}
As a consequence, we obtain
\begin{align*}
&\beta\big(\|u(t)\|^2_{H^1}+\|v(t)\|^2_H\big)-\beta\big(\|u_0\|^2_{H^1}+\|v_0\|^2_H\big)\\
&\le -\beta \int_0^t \|v(r)\|^2_H+2a_2\|v(r)\|^{\lambda+1}_{L^{
\lambda+1}}\d r+\beta\big(\Tr(QQ^*)+2a_3|\domain|\big)t\\
&\qquad +M(t)-\tfrac{1}{2}\cdot \tfrac{1}{2\beta\Tr(QQ^*)}\la M\ra(t).
\end{align*}
Now applying the exponential Martingale inequality to $M(t)$ gives
\begin{align*}
\P\Big(\sup_{t\ge 0}\Big[ M(t)- \tfrac{1}{2}\cdot \tfrac{1}{2\beta\Tr(QQ^*)}\la M\ra(t)  \Big]\ge R    \Big)\le e^{-  \frac{1}{2\beta\Tr(QQ^*)}R}.
\end{align*}
Hence, for $\beta$ sufficiently small, we obtain
\begin{align*}
&\E\exp\Big\{\sup_{t\ge 0}\Big[ M(t)- \tfrac{1}{2}\cdot \tfrac{1}{2\beta\Tr(QQ^*)}\la M\ra(t)  \Big]    \Big\}\\
&\le 1+\int_1^\infty \P\Big(\sup_{t\ge 0}\Big[ M(t)- \tfrac{1}{2}\cdot \tfrac{1}{2\beta\Tr(QQ^*)}\la M\ra(t)  \Big]\ge \log R    \Big)\d R\\
&\le 1+\int_1^\infty R^{-  \frac{1}{2\beta\Tr(QQ^*)}}\d R= 1+ \frac{1}{  \frac{1}{2\beta\Tr(QQ^*)}-1}\le 1+4\beta\Tr(QQ^*).
\end{align*}
Altogether, we arrive at the bound for all $\beta$ sufficiently small and $t\ge 0$,
\begin{align*}&
\E\exp \Big\{  \sup_{r\in[0,t]}\beta\big(\|u(r)\|^2_{H^1}+\|v(r)\|^2_H\big)+\beta \int_0^t \|v(r)\|^2_H+2a_2\|v(r)\|^{\lambda+1}_{L^{
\lambda+1}}\d r \Big\}\\
&\le \E\exp\Big\{\sup_{t\ge 0}\Big[ M(t)- \tfrac{1}{2}\cdot \tfrac{1}{2\beta\Tr(QQ^*)}\la M\ra(t)  \Big]    \Big\}\\
&\qquad\qquad\times\exp\Big\{\beta\big(\|u_0\|^2_{H^1}+\|v_0\|^2_H\big)+\beta\big(\Tr(QQ^*)+2a_3|\domain|\big)t\Big\}\\
&\le \big( 1+4\beta\Tr(QQ^*)\big)\exp\Big\{\beta\big(\|u_0\|^2_{H^1}+\|v_0\|^2_H\big)+\beta\big(\Tr(QQ^*)+2a_3|\domain|\big)t\Big\}.
\end{align*}
This establishes~\eqref{ineq:moment-bound:H^1:exponential}, as claimed.
\end{proof}

Next, in Lemma~\ref{lem:moment-bound:ubar}, we establish moment bounds on the difference $\|U(t;U_0)-U(t;\Ut_0)\|_{\Hcal^1}$ while specifically making use of condition \eqref{cond:lambda} of $\lambda$. In particular, the results of Lemma~\ref{lem:moment-bound:ubar} will be employed in the proof of Proposition~\ref{prop:ergodicity} in Section~\ref{sec:ergodicity:auxiliary}.

\begin{lemma} \label{lem:moment-bound:ubar}
Given $(u_0,v_0), (\ut_0,\vt_0)\in \Hcal^1$, let $(u(t),v(t))$ and $(\ut(t),\vt(t))$, respectively, be the solutions of~\eqref{eqn:wave} with initial conditions $(u_0,v_0)$ and $(\ut_0,\vt_0)$. Then, the following holds:

1. For all $t\ge 0$,
\begin{align} \label{ineq:moment-bound:|ubar|^2<|u_0|^2}
\|u(t)-\ut(t)\|^2_{H^1}+\|v(t)-\vt(t)\|^2_{H} \le \|u_0-\ut_0\|^2_{H^1}+\|v_0-\vt_0\|^2_{H}.
\end{align}

2. For all $\varepsilon>0$ sufficiently small,
\begin{align} \label{ineq:moment-bound:|ubar|:exponential-decay}
&\E\Big[\|u(t)-\ut(t)\|^2_{H^1}+\|v(t)-\vt(t)\|^2_{H}+\varepsilon\la u(t)-\ut(t),v(t)-\vt(t)\ra_H  \Big] \nt \\
&\le \big(\|u_0-\ut_0\|^2_{H^1}+\|v_0-\vt_0\|^2_{H}+\varepsilon\la u_0-\ut_0,v_0-\vt_0\ra_H \big) \nt \\
&\qquad \times \big( 1+C\varepsilon^2\big)\exp\Big\{\varepsilon^2C\big(\|(u_0,v_0)\|^2_{\Hcal^1}+\|(\ut_0,\vt_0)\|^2_{\Hcal^1}\big)- \tfrac{1}{8}\ve t\Big\},\quad t\ge 0.
\end{align}
In the above, $C$ is a positive constant independent of $\ve$, $t$, $(u_0,v_0)$, and $(\ut_0,\vt_0)$.

\end{lemma}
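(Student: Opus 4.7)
The plan is to subtract the two equations and work with the differences $\bar u := u - \ut$, $\bar v := v - \vt$, which solve the noise-free hyperbolic system $\partial_t \bar u = \bar v$, $\partial_t \bar v = -A\bar u - \bar v - (\f'(v) - \f'(\vt))$ (the Wiener noise cancels since both solutions are driven by the same $w$). For Part 1, I would differentiate $\|\bar u\|^2_{H^1}+\|\bar v\|^2_H$ to obtain $-2\|\bar v\|^2_H - 2\langle \f'(v)-\f'(\vt),\bar v\rangle_H$; the mean value identity $\f'(v)-\f'(\vt) = \bar v\int_0^1\f''(\vt+s\bar v)\,ds$ together with $\f'' \ge a_5 > -1$ from Assumption~\ref{cond:phi} forces this to be at most $-2(1+a_5)\|\bar v\|^2_H \le 0$, and the claim follows by integration.

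For Part 2, the main tool is the modified Lyapunov functional $\Phi := \|\bar u\|^2_{H^1}+\|\bar v\|^2_H+\varepsilon\langle \bar u,\bar v\rangle_H$, which is comparable to $\|\bar u\|^2_{H^1}+\|\bar v\|^2_H$ for small $\varepsilon$. Differentiating $\langle \bar u,\bar v\rangle$ produces a $-\varepsilon\|\bar u\|^2_{H^1}$ dissipation, so after a routine Young's estimate to absorb $-\varepsilon\langle \bar u,\bar v\rangle$ (using Poincar\'e to convert $\|\bar u\|_H$ into $\|\bar u\|_{H^1}$), I expect to reach
\begin{equation*}
\frac{d\Phi}{dt} \le -c_1\|\bar v\|^2_H - c_2\varepsilon\|\bar u\|^2_{H^1} - \varepsilon\langle \bar u,\f'(v)-\f'(\vt)\rangle_H
\end{equation*}
for constants $c_1,c_2>0$ depending only on $\mu := 1+a_5$. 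The last (nonlinear) term I would bound using $|\f''(x)|\le a_4(|x|^{\lambda-1}+1)$ followed by H\"older's inequality with exponents $\big((\lambda+1)/(\lambda-1),\,2,\,2(\lambda+1)/(3-\lambda)\big)$ and the Sobolev embedding $H^1_0\hookrightarrow L^{2(\lambda+1)/(3-\lambda)}$; these hold precisely under the dimension-dependent restriction on $\lambda$ in Assumption~\ref{cond:phi}.3. The result is $|\langle \bar u,\f'(v)-\f'(\vt)\rangle|\le C H(t)\|\bar u\|_{H^1}\|\bar v\|_H$ with $H(t):=1+\|v\|^{\lambda-1}_{L^{\lambda+1}}+\|\vt\|^{\lambda-1}_{L^{\lambda+1}}$.

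The crucial technical step is Young's inequality $ab\le a^2/(2\delta)+\delta b^2/2$ applied with the \emph{time-dependent} weight $\delta(t) \propto 1/H(t)^2$, producing
\begin{equation*}
\varepsilon C H(t)\|\bar u\|_{H^1}\|\bar v\|_H \le \frac{\varepsilon^2 C^2 H(t)^2}{2c_1}\|\bar u\|^2_{H^1} + \frac{c_1}{2}\|\bar v\|^2_H.
\end{equation*}
The $(c_1/2)\|\bar v\|^2$ piece is absorbed into the $-c_1\|\bar v\|^2$ dissipation, while the remaining $\varepsilon^2 H^2\|\bar u\|^2$ perturbation carries the crucial factor $\varepsilon^2$ rather than $\varepsilon$. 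Exploiting $\|\bar u\|^2_{H^1}+\|\bar v\|^2_H\asymp\Phi$ for small $\varepsilon$, pathwise Gronwall yields $\Phi(t)\le \Phi(0)\exp(-c\varepsilon t + C\varepsilon^2\int_0^t H(s)^2\,ds)$.

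To complete the argument, I would use $\|v\|^{2(\lambda-1)}_{L^{\lambda+1}}\le 1+\|v\|^{\lambda+1}_{L^{\lambda+1}}$ (valid since $\lambda\le 3$), take expectations with Cauchy--Schwarz to decouple the $v$ and $\vt$ contributions, and apply Lemma~\ref{lem:moment-bound:int_0^t|v|} with parameter $\beta = O(\varepsilon^2)$. The lemma's prefactor $(1+4\beta\Tr(QQ^*))$ then becomes $(1+C\varepsilon^2)$; its $\beta(\|u_0\|^2_{H^1}+\|v_0\|^2_H)$ contribution yields exactly the $\varepsilon^2 C(\|(u_0,v_0)\|^2_{\Hcal^1}+\|(\ut_0,\vt_0)\|^2_{\Hcal^1})$ factor in the exponent; and its linear-in-$t$ contribution is $O(\varepsilon^2 t)$, which is dominated by the $-c\varepsilon t$ dissipation for $\varepsilon$ small, producing the final rate $-\tfrac{1}{8}\varepsilon t$ after a suitable choice of constants. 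The main obstacle is precisely this $\varepsilon$-versus-$\varepsilon^2$ bookkeeping: a constant-weight Young's in the cross-term estimate would leave $\varepsilon\int_0^t H(s)^2\,ds$ in the exponent and hence force an $\varepsilon$-prefactor (rather than $\varepsilon^2$) on the initial-data contribution, so the time-dependent weight $\delta(t)\propto 1/H(t)^2$ and the corresponding choice $\beta\propto\varepsilon^2$ in Lemma~\ref{lem:moment-bound:int_0^t|v|} are what produce the sharp $\varepsilon^2$ scaling claimed in \eqref{ineq:moment-bound:|ubar|:exponential-decay}.
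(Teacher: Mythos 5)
Your proposal is correct and follows essentially the same route as the paper: the same noise-free difference system and the bound $\f''\ge a_5>-1$ for Part 1, and for Part 2 the same perturbed functional $\|\bar u\|^2_{H^1}+\|\bar v\|^2_H+\varepsilon\langle\bar u,\bar v\rangle_H$, the same cross-term estimate $c\,H(t)\|\bar u\|_{H^1}\|\bar v\|_H$ (the paper reaches it via the $H^{-1}$ dual norm rather than your direct three-exponent H\"older, but under the identical Sobolev/$\lambda$ restrictions), the same Young's-inequality step that converts the $\varepsilon$ prefactor into $\varepsilon^2 H^2\|\bar u\|^2_{H^1}$, and the same Gronwall-plus-Lemma~\ref{lem:moment-bound:int_0^t|v|} conclusion with $\beta\propto\varepsilon^2$. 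Your explicit Cauchy--Schwarz decoupling of the $v$ and $\vt$ exponential moments is a detail the paper leaves implicit, but it is the natural way to justify that step.
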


It is worth to mention that the factors of $\ve^2$ as well as $-\ve t$ on the right--hand side of~\eqref{ineq:moment-bound:|ubar|:exponential-decay} are crucial for the mixing results. To be more specific, we will ultimately choose $\ve$ sufficiently small in order to establish the existence of $d$--small sets. See the proofs of Proposition~\ref{prop:ergodicity} and Lemma~\ref{lem:contracting:W_d} for a further discussion of this point.

\begin{proof}[Proof of Lemma~\ref{lem:moment-bound:ubar}]
1. Setting $\ubar(t)=u(t)-\ut(t)$ and $\vbar(t)=v(t)-\vt(t)$, observe that $(\ubar(t),\vbar(t))$ satisfies the equation
\begin{align*}
\ddt \ubar(t)&=\vbar(t),\quad \ddt \vbar(t)=-A\ubar(t)-\vbar(t)-[\f'(v(t))-\f'(\vt(t)) ],\\
\ubar(0)&=\ubar_0=u_0-\ut_0,\quad \vbar(t)=\vbar_0=v_0-\vt_0.
\end{align*}
A routine calculation while making use of~\eqref{cond:phi:inf.phi''>-1} gives
\begin{align}
\ddt \big(\|\ubar(t)\|^2_{H^1}+\|\vbar(t)\|^2_{H}\big)&=-2\|\vbar(t)\|^2_{H}-2\la \f'(v(t))-\f'(\vt(t)),\vbar(t)\ra_H   \nt \\
&\le -2(1+a_5)\|\vbar(t)\|^2_{H}, \label{ineq:d.|ubar|^2}
\end{align}
where $a_5$ is as in~\eqref{cond:phi:inf.phi''>-1}. Estimate~\eqref{ineq:moment-bound:|ubar|^2<|u_0|^2} now follows by integrating both sides of~\eqref{ineq:d.|ubar|^2} with respect to time.

2. With regard to \eqref{ineq:moment-bound:|ubar|:exponential-decay}, we first note that
\begin{align} \label{eqn:d<ubar,vbar>}
\ddt \la \ubar(t),\vbar(t)\ra_H &= \|\vbar(t)\|_H^2-\|\ubar(t)\|^2_{H^1}-\la \ubar(t),\vbar(t)\ra_H-\la \f'(v(t))-\f'(\vt(t)),\ubar(t)\ra_H .
\end{align}
To estimate the last term on the above right--hand side, we follow along the lines of the proof of \cite[Theorem 4.1]{barbu2007stochastic} while making use of Assumption~\ref{cond:phi}, part 3 on $\lambda$. By Holder's inequality, we have that
\begin{align*}
\la \f'(v(t))-\f'(\vt(t)),\ubar(t)\ra_H & \le \|\ubar(t)\|_{H^1}\|\f'(v(t))-\f'(\vt(t)\|_{H^{-1}}.
\end{align*}
Using Sobolev embedding, we further estimate
\begin{align*}
\|\f'(v(t))-\f'(\vt(t)\|_{H^{-1}}&\le c \|\f'(v(t))-\f'(\vt(t)\|_{L^{q^*}}\\
&\le c\Big(\int_{\domain}|\vbar(t)|^{q^*}\big( |v(t)|^{\lambda-1}+|\vt(t)|^{\lambda-1}+1  \big)^{q^*}\d x\Big)^{1/q^*}.
\end{align*}
In the above, $q^*$ is such that $\frac{1}{q^*}=1-\frac{1}{p^*}$ where $p^*=\infty$ if $d=1$, $p^*\ge 2$ if $d=2$ and $p^*=6$ if $d=3$ \cite{barbu2007stochastic}. Also, the choice $\lambda$ as in~\eqref{cond:lambda} implies
\begin{align*}
\frac{q^*}{2}+\frac{(\lambda-1)q^*}{\lambda+1}\le 1.
\end{align*}
So, we invoke Holder's inequality again to obtain
\begin{align*}
\Big(\int_{\domain}|\vbar(t)|^{q^*}\big( |v(t)|^{\lambda-1}+|\vt(t)|^{\lambda-1}+1  \big)^{q^*}\d x\Big)^{1/q^*}&\le c\|\vbar(t)\|_H \big( \|v(t)\|^{\lambda-1}_{L^{\lambda+1}}+\|\vt(t)\|^{\lambda-1}_{L^{\lambda+1}}+1\big),
\end{align*}
whence
\begin{align*}
\la \f'(v(t))-\f'(\vt(t)),\ubar(t)\ra_H &\le \|\ubar(t)\|_{H^1}\|\f'(v(t))-\f'(\vt(t)\|_{H^{-1}}\\
&\le c\|\ubar(t)\|_{H^1}\|\vbar(t)\|_H \big( \|v(t)\|^{\lambda-1}_{L^{\lambda+1}}+\|\vt(t)\|^{\lambda-1}_{L^{\lambda+1}}+1\big).
\end{align*}
This together with~\eqref{eqn:d<ubar,vbar>} yields the estimate
\begin{align} \label{ineq:d<ubar,vbar>}
\ddt \la \ubar(t),\vbar(t)\ra_H &\le \|\vbar(t)\|_H^2-\|\ubar(t)\|^2_{H^1}-\la \ubar(t),\vbar(t)\ra_H \nt\\
&\qquad+c\|\ubar(t)\|_{H^1}\|\vbar(t)\|_H \big( \|v(t)\|^{\lambda-1}_{L^{\lambda+1}}+\|\vt(t)\|^{\lambda-1}_{L^{\lambda+1}}+1\big).
\end{align}

Turning back to~\eqref{ineq:moment-bound:|ubar|:exponential-decay}, let $\ve>0$ be given and be chosen later. We combine~\eqref{ineq:d<ubar,vbar>} with~\eqref{ineq:d.|ubar|^2} to obtain
\begin{align*}
&\ddt\big(\|\ubar(t)\|^2_{H^1}+\|\vbar(t)\|^2_{H}+\ve \la \ubar(t),\vbar(t)\ra_H \big)\\
&\le -\big[2(1+a_5)-\ve\big]\|\vbar(t)\|^2_{H}-\ve\|\ubar(t)\|^2_{H^1}-\ve\la \ubar(t),\vbar(t)\ra_H \nt\\
&\qquad+\ve c\|\ubar(t)\|_{H^1}\|\vbar(t)\|_H \big( \|v(t)\|^{\lambda-1}_{L^{\lambda+1}}+\|\vt(t)\|^{\lambda-1}_{L^{\lambda+1}}+1\big).
\end{align*}
Since $\ve$ is assumed to be sufficiently small and $C$ does not depend on $\ve$, we infer
\begin{align*}
& -\big[2(1+a_5)-\ve\big]\|\vbar(t)\|^2_{H}-\ve\|\ubar(t)\|^2_{H^1}-\ve\la \ubar(t),\vbar(t)\ra_H \nt+\ve c\|\ubar(t)\|_{H^1}\|\vbar(t)\|_H\\
&  \le -(1+a_5)\|\vbar(t)\|^2_{H}-\tfrac{1}{2}\ve\|\ubar(t)\|^2_{H^1} .
\end{align*}
Also, (recalling $\lambda\le 3$)
\begin{align*}
&\ve c\|\ubar(t)\|_{H^1}\|\vbar(t)\|_H \big( \|v(t)\|^{\lambda-1}_{L^{\lambda+1}}+\|\vt(t)\|^{\lambda-1}_{L^{\lambda+1}}\big)\\
&\le \varepsilon^2  c\|\ubar(t)\|_{H^1}^2 \big( \|v(t)\|^{2(\lambda-1)}_{L^{\lambda+1}}+\|\vt(t)\|^{2(\lambda-1)}_{L^{\lambda+1}}\big) + \tfrac{1}{2}(1+a_5)\|\vbar(t)\|_H^2\\
&\le \varepsilon^2  c\|\ubar(t)\|_{H^1}^2 \big( \|v(t)\|^{\lambda+1}_{L^{\lambda+1}}+\|\vt(t)\|^{\lambda+1}_{L^{\lambda+1}}+1\big) + \tfrac{1}{2}(1+a_5)\|\vbar(t)\|_H^2.
\end{align*}
It follows that by taking $\varepsilon$ small enough
\begin{align*}
&\ddt\big(\|\ubar(t)\|^2_{H^1}+\|\vbar(t)\|^2_{H}+\ve \la \ubar(t),\vbar(t)\ra_H \big)\\
& \le -\tfrac{1}{2}(1+a_5)\|\vbar(t)\|^2_{H}-\tfrac{1}{2}\ve\|\ubar(t)\|^2_{H^1} + \varepsilon^2  c\|\ubar(t)\|_{H^1}^2 \big( \|v(t)\|^{\lambda+1}_{L^{\lambda+1}}+\|\vt(t)\|^{\lambda+1}_{L^{\lambda+1}}+1\big)\\
&\le \Big[-\tfrac{1}{4}\varepsilon+\varepsilon^2 c\big( \|v(t)\|^{\lambda+1}_{L^{\lambda+1}}+\|\vt(t)\|^{\lambda+1}_{L^{\lambda+1}}\big)\Big] \big(\|\ubar(t)\|^2_{H^1}+\|\vbar(t)\|^2_{H}+\ve \la \ubar(t),\vbar(t)\ra_H \big),
\end{align*}
implying
\begin{align*}
&\|\ubar(t)\|^2_{H^1}+\|\vbar(t)\|^2_{H}+\ve \la \ubar(t),\vbar(t)\ra_H\\
&\le \big(\|\ubar_0\|^2_{H^1}+\|\vbar_0\|^2_{H}+\ve \la \ubar_0,\vbar_0\ra_H\big) \exp\Big\{ -\tfrac{1}{4}\varepsilon t+\varepsilon^2 c\int_0^t \big( \|v(r)\|^{\lambda+1}_{L^{\lambda+1}}+\|\vt(r)\|^{\lambda+1}_{L^{\lambda+1}}\big)\d r \Big\} .
\end{align*}
In view of~\eqref{ineq:moment-bound:H^1:exponential}, the following holds
\begin{align*}
&\E \exp\Big\{\varepsilon^2 c\int_0^t \big( \|v(r)\|^{\lambda+1}_{L^{\lambda+1}}+\|\vt(r)\|^{\lambda+1}_{L^{\lambda+1}}\big)\d r \Big\}\\
&\le  \big( 1+C\varepsilon^2\big)\exp\Big\{\varepsilon^2C\big(\|(u_0,v_0)\|^2_{\Hcal^1}+\|(\ut_0,\vt_0)\|^2_{\Hcal^1}\big)+\ve^2 C t\Big\}.
\end{align*}
As a consequence, we obtain the bound in expectation for all $\ve$ sufficiently small
\begin{align*}
&\E\Big[\|\ubar(t)\|^2_{H^1}+\|\vbar(t)\|^2_{H}+\ve \la \ubar(t),\vbar(t)\ra_H\Big]\\
&\le \big(\|\ubar_0\|^2_{H^1}+\|\vbar_0\|^2_{H}+\ve \la \ubar_0,\vbar_0\ra_H\big)\big( 1+C\varepsilon^2\big)\exp\Big\{\varepsilon^2C\big(\|(u_0,v_0)\|^2_{\Hcal^1}+\|(\ut_0,\vt_0)\|^2_{\Hcal^1}\big)- \tfrac{1}{8}\ve t\Big\}.
\end{align*}
We emphasize that $C$ is a positive constant independent of $\ve$. This establishes~\eqref{ineq:moment-bound:|ubar|:exponential-decay}, thereby finishing the proof.

\end{proof}

Lastly, we establish a moment bound in $\Hcal^2$ provided the initial data $(u_0,v_0)\in\Hcal^2$. For this purpose, we introduce the function
\begin{align} \label{form:Phi}
\Psitwo(u,v) = \|u\|^2_{H^2}+\|v\|^2_{H^1}+\la u,v\ra_{H^1} +\tfrac{1}{2}\|u\|^2_{H^1}+\|\f(v)\|_{L^1}.
\end{align}

\begin{lemma} \label{lem:moment-bound:H^2}
Given $(u_0,v_0)\in \Hcal^2$, let $(u(t),v(t))$ be the solution of~\eqref{eqn:wave} with initial condition $(u_0,v_0)$. Then, for all $n\ge1$ and $\gamma\in(0,1/\lambda)$ where $\lambda$ is as in Assumption~\ref{cond:phi}, the following holds
\begin{align} \label{ineq:moment-bound:H^2}
\E\big[ \Psitwo(u(t),v(t))^n\big]+c_{n,\gamma}\int_0^t \E\big[\Psitwo(u(s),v(s))^{n-1+\gamma}\big]\emph{d}s \le \Psitwo(u_0,v_0)^n +C_{n,\gamma}t,\quad t\ge 0.
\end{align}
In the above, $\Psitwo$ is the function defined in~\eqref{form:Phi}, $c_{n,\gamma}$ and $C_{n,\gamma}$ are positive constants independent of $(u_0,v_0)$ and $t$.
\end{lemma}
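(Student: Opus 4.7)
The plan is to apply It\^o's formula to $\Phi$ and then to $\Phi^n$, exploit the cancellations produced by the tailored form of $\Phi$, and absorb the residual terms using the assumptions on $\varphi$, $Q$, and $\lambda$. Computing $d\Phi(u(t),v(t))$ summand by summand along~\eqref{eqn:wave}, three cancellations drive the argument: $(i)$ the leading-order terms $\pm 2\langle A^2 u,v\rangle_H$ arising from $d\|u\|^2_{H^2}$ and from $d\|v\|^2_{H^1}$ cancel, which is why $\|u\|^2_{H^2}$ is included; $(ii)$ the $\pm\langle u,v\rangle_{H^1}$ terms from $d\langle u,v\rangle_{H^1}$ and from $d\tfrac{1}{2}\|u\|^2_{H^1}$ cancel, which explains the coefficient $\tfrac{1}{2}$; and $(iii)$ after integration by parts, $-2\langle Av,\varphi'(v)\rangle_H = -2\int_\domain\varphi''(v)|\nabla v|^2\d x \le -2a_5\|v\|^2_{H^1}$ by~\eqref{cond:phi:inf.phi''>-1}, a strictly negative contribution since $a_5>-1$. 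Combined with $\langle\varphi'(v),v\rangle_H\ge a_2\|v\|^{\lambda+1}_{L^{\lambda+1}}-a_3|\domain|$ from~\eqref{cond:phi:x.phi'(x)<-x^(lambda+1)}, this produces the good drift $-c_0 R$ where $R:=\|u\|^2_{H^2}+\|v\|^2_{H^1}+\|v\|^{\lambda+1}_{L^{\lambda+1}}$.

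The residual bad contributions are the mixed cross term $-2\int_\domain\varphi''(v)\nabla u\cdot\nabla v\,\d x$ (arising from $d\langle u,v\rangle_{H^1}$ and $d\|\varphi(v)\|_{L^1}$), the It\^o correction $\mathrm{Tr}(AQQ^*)$ from $d\|v\|^2_{H^1}$ (finite by~\eqref{cond:Q:H^4}), and the It\^o correction from $d\|\varphi(v)\|_{L^1}$. Rewriting the cross term via integration by parts as $-2\langle\varphi'(v),Au\rangle_H$, applying Cauchy--Schwarz and Young's inequalities, and controlling $\|\varphi'(v)\|_H$ through~\eqref{cond:phi:phi'(x)=O(x^lambda)} and a Sobolev interpolation between $L^{\lambda+1}$ and $H^1$ permitted by~\eqref{cond:lambda}, these residuals are bounded by $\epsilon\|u\|^2_{H^2}+C_\epsilon R^{\alpha}+C$ for some $\alpha<2$. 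Choosing $\epsilon$ small enough to be absorbed by the good drift yields $\mathcal{L}\Phi\le -c_0'R+CR^{\alpha}+C$; together with $\Phi\asymp R+\mathrm{const}$ and Young's inequality, this gives $\mathcal{L}\Phi\le -c\,\Phi^{\gamma}+C$ for any $\gamma\in(0,1/\lambda)$, establishing~\eqref{ineq:moment-bound:H^2} in the case $n=1$ after taking expectations and integrating.

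For $n\ge 2$, I apply It\^o's formula to $\Phi^n$, producing
\begin{equation*}
\mathcal{L}\Phi^n=n\Phi^{n-1}\mathcal{L}\Phi+\tfrac{n(n-1)}{2}\Phi^{n-2}\|Q^*\nabla_v\Phi\|^2_H,
\end{equation*}
with $\nabla_v\Phi=2Av+Au+\varphi'(v)$. The estimate $\lambda_k^2\alpha_k\le C$ implied by~\eqref{cond:Q:H^4} yields that $Q$ maps $H^{-1}$ continuously into $H$, whence $\|Q^*(2Av+Au)\|_H^2\le CR$, while $\|Q^*\varphi'(v)\|_H^2\le C\|\varphi'(v)\|^2_{H^{-1}}\le CR^{\alpha}$ by Sobolev duality. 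Thus the quadratic-variation term is at most $C_n\Phi^{n-2+\alpha}$ which, combined with the one-step bound on $\mathcal{L}\Phi$ and another application of Young's inequality, gives $\mathcal{L}\Phi^n\le -c_{n,\gamma}\Phi^{n-1+\gamma}+C_{n,\gamma}$ for any $\gamma\in(0,1/\lambda)$; integrating in time then produces~\eqref{ineq:moment-bound:H^2}. \textbf{The main obstacle} is the simultaneous control of the polynomial-order quantities $-2\int\varphi''(v)\nabla u\cdot\nabla v\,\d x$ in the first-order drift and $\|Q^*\varphi'(v)\|_H^2$ in the It\^o correction: the dimension--dependent restriction~\eqref{cond:lambda} on $\lambda$ is precisely what enables the Sobolev interpolation yielding exponent $\alpha<2$, and the sub--quadratic growth of the It\^o correction in $R$ is what restricts the Lyapunov exponent to $\gamma<1/\lambda$.
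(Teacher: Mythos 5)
Your bookkeeping of the cancellations in $\L\Phi$ is essentially correct, but the way you dispose of the cross term $-2\la Au,\f'(v)\ra_H$ opens a genuine gap. You estimate it by Cauchy--Schwarz/Young as $\epsilon\|Au\|_H^2+C_\epsilon\|\f'(v)\|_H^2$ and claim the residue is $C_\epsilon R^{\alpha}$ with $\alpha<2$. Even granting the interpolation, the exponent one gets from controlling $\|v\|_{L^{2\lambda}}^{2\lambda}$ by $\|v\|_{L^{\lambda+1}}^{\lambda+1}$ and $\|v\|_{H^1}^2$ is strictly larger than $1$ whenever $\lambda>1$ (e.g.\ $\alpha=5/3$ for $d=3$, $\lambda=2$, and $\alpha\uparrow 2$ for $d=1$, $\lambda=3$). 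A term $+C_\epsilon R^{\alpha}$ with $\alpha>1$ cannot be absorbed by a drift $-c_0R$ that is only linear in $R$, so your asserted implication $\L\Phi\le -c_0'R+CR^{\alpha}+C\ \Rightarrow\ \L\Phi\le -c\Phi^{\gamma}+C$ does not follow. The paper avoids estimating this cross term altogether: the generator applied to $\|\f(v)\|_{L^1}$ produces, besides the terms you list, the \emph{good} term $-\|\f'(v)\|^2_H$ and a second copy of $-\la Au,\f'(v)\ra_H$, so that together with the $-\|u\|^2_{H^2}$ coming from $\la u,v\ra_{H^1}$ one completes the square, $-\|Au\|^2_H-2\la Au,\f'(v)\ra_H-\|\f'(v)\|^2_H=-\|Au+\f'(v)\|^2_H\le 0$.

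A consequence is that no negative multiple of $\|u\|^2_{H^2}$ survives at full strength; only $-\|Au+\f'(v)\|^2_H$ does. The restriction $\gamma<1/\lambda$ then arises already at the first-order level, from writing $\|Au\|^{2\gamma}_H\le c\|Au+\f'(v)\|^{2\gamma}_H+c\|\f'(v)\|^{2\gamma}_H$ and absorbing $\|\f'(v)\|^{2\gamma}_H\lesssim \|v\|^{2\gamma\lambda}_{H^1}+1$ into $(1+a_5)\|v\|^2_{H^1}$, which requires $\gamma\lambda<1$ --- not from sub-quadratic growth of the second-order It\^o correction, as you assert in your closing paragraph (the constraint is already present for $n=1$). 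A smaller slip: $-2\la Av,\f'(v)\ra_H\le -2a_5\|v\|^2_{H^1}$ is not by itself negative when $a_5<0$; it is only the combination with the damping term $-2\|v\|^2_{H^1}$ that yields $-2(1+a_5)\|v\|^2_{H^1}<0$. Your handling of the trace terms and of the induction for $n\ge 2$ is broadly in line with the paper's, but the first-order estimate must be repaired as above before the rest goes through.
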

\begin{proof}
We proceed to prove~\eqref{ineq:moment-bound:H^2} by induction on $n$. We start with the base case $n=1$. From equation~\eqref{eqn:wave}, a routine calculation gives
\begin{align*} 
\L \|\f(v)\|_{L^1}= -\la Au,\f'(v)\ra_{H}-\la v,\f'(v)\ra_H-\|\f'(v)\|^2_H+\tfrac{1}{2}\Tr(\f''(v) QQ^* ).
\end{align*}
Also,
\begin{align*}
&\L \Big(\|u\|^2_{H^2}+\|v\|^2_{H^1}+\la u,v\ra_{H^1} +\tfrac{1}{2}\|u\|^2_{H^1} \Big)\\
&= -2\|v\|^2_{H^1}-2\la \f''(v)\grad v,\grad v\ra_{H}+\Tr(QAQ^*)-\|u\|^2_{H^2}-\la Au,\f'(v)\ra_H.
\end{align*}
Combining the above identities with $\Psitwo$ as in~\eqref{form:Phi}, we obtain
\begin{align} \label{eqn:L.Psi_2}
\L \Psitwo(u,v)&= -2\|v\|^2_{H^1}-2\la \f''(v)\grad v,\grad v\ra_{H}+\Tr(QAQ^*) \nt \\
&\qquad-\|Au+\f'(v)\|^2_H-\la v,\f'(v)\ra_H+\tfrac{1}{2}\Tr(\f''(v) QQ^* ).
\end{align}
In view of condition \eqref{cond:phi:x.phi'(x)<-x^(lambda+1)}, we readily have
\begin{align*}
-\la v,\f'(v)\ra_H \le -a_2\|v\|^{\lambda+1}_{L^{\lambda+1}}+a_3|\domain|,
\end{align*}
where $a_2,a_3,\lambda$ are as in Assumption~\ref{cond:phi} and $|\domain|$ denotes the volume of $\domain$. Also, from~\eqref{cond:phi:inf.phi''>-1}, $\inf_{x\in\rbb} \f''(x)=a_5>-1$ implies
\begin{align*}
-2\|v\|^2_{H^1}-2\la \f''(v)\grad v,\grad v\ra_{H} \le -2(1+a_5)\|v\|^2_{H^1}.
\end{align*}
With regards to the trace terms on the right--hand side of~\eqref{eqn:L.Psi_2}, we invoke~\eqref{cond:Q:H^4} to see that
\begin{align*}
\Tr(QAQ^*) =\sum_{k\ge 1}\lambda_k^2\alpha_k<\infty.
\end{align*}
Also, together with~\eqref{cond:phi:phi''=O(x^(lambda-1))}, we estimate
\begin{align*}
\Tr(\f''(v) QQ^* ) &=\sum_{k\ge 1} \lambda_k^2\int_{\domain}\f''(v)|e_k|^2\d x\\
&\le \sum_{k\ge 1} \lambda_k^2\|e_k\|_{L^\infty}^2\big(a_4\|v\|^{\lambda-1}_{L^{\lambda-1}}+a_4|\domain|\big).
\end{align*}
Since $H^2\hookrightarrow L^\infty$, we further deduce
\begin{align*}
\sum_{k\ge 1} \lambda_k^2\|e_k\|_{L^\infty}^2 \le c\sum_{k\ge 1} \lambda_k^2\|Ae_k\|_{H}^2=c\sum_{k\ge 1} \lambda_k^2\alpha_k^2<\infty,
\end{align*}
whence
\begin{align*}
\tfrac{1}{2}\Tr(\f''(v) QQ^* )& \le c\sum_{k\ge 1} \lambda_k^2\alpha_k^2\big(a_4\|v\|^{\lambda-1}_{L^{\lambda-1}}+a_4|\domain|\big)\le  \tfrac{1}{2}a_2\|v\|^{\lambda+1}_{L^{\lambda+1}}+C.
\end{align*}
As a consequence, we obtain
\begin{align} \label{ineq:L.Psi_2}
\L\Psitwo(u,v)&\le -2(1+a_5)\|v\|^2_{H^1}-\tfrac{1}{2}a_2\|v\|^{\lambda+1}_{L^{\lambda+1}}-\|Au+\f'(v)\|^2_H+C.
\end{align}
Now, fixing a $\gamma\in(0,1)$ to be chosen later, we note that
\begin{align*}
\|Au\|_H^{2\gamma}\le c\|Au+\f'(v)\|^{2\gamma}_H+c\|\f'(v)\|^{2\gamma}_H.
\end{align*}
Recalling condition~\eqref{cond:phi:phi'(x)=O(x^lambda)}, since $\lambda\le 3$, we use Holder's inequality together with the fact that $H^1\hookrightarrow L^6$ to estimate
\begin{align*}
\|\f'(v)\|^{2\gamma}_H \le c\|v\|^{2\gamma\lambda}_{L^{2\lambda}}+c&\le c\|v\|^{2\gamma\lambda}_{L^6}+c\le c\|v\|^{2\gamma\lambda}_{H^1}+c.
\end{align*}
So, 
\begin{align*}
\|Au\|_H^{2\gamma}\le c\big(\|Au+\f'(v)\|^{2\gamma}_H+\|v\|^{2\gamma\lambda}_{H^1}+1\big).
\end{align*}
Choosing $\gamma<1/\lambda$, it follows that
\begin{align*}
\|Au\|_H^{2\gamma} \le \tfrac{1}{2}\|Au+\f'(v)\|^2_H+(1+a_5)\|v\|^{2}_{H^1}+C.
\end{align*}
From~\eqref{ineq:L.Psi_2}, we obtain
\begin{align*}
\L\Psitwo(u,v) \le -(1+a_5)\|v\|^2_{H^1}-\tfrac{1}{2}a_2\|v\|^{\lambda+1}_{L^{\lambda+1}}-\tfrac{1}{2}\|Au+\f'(v)\|^2_H-\|Au\|^{2\gamma}_H+C.
\end{align*}
Since $U$ is dominated by $|x|^{\lambda+1}$, we further deduce
\begin{align} \label{ineq:L.Psi_2:b}
\L\Psitwo(u,v) \le -c\Psitwo(u,v)^\gamma+C.
\end{align}
By It\^o's formula, this implies estimate~\eqref{ineq:moment-bound:H^2} for the base case $n=1$.

Now, assuming~\eqref{ineq:moment-bound:H^2} holds for up to $n-1$, consider the case $n\ge 2$. We compute
\begin{align} \label{eqn:L.Psi_2^n}
\L \Psitwo^n = n\Psitwo^{n-1}\L\Psitwo+\tfrac{1}{2}
n(n-1)\Psitwo^{n-2}\sum_{k\ge 1}\lambda_k^2\big|\la 2Av+Au+\f'(v),e_k\ra_H\big|^2.
\end{align}
With regard to the first term on the above right hand side, we invoke~\eqref{ineq:L.Psi_2} and~\eqref{ineq:L.Psi_2:b} to estimate
\begin{align*}
n\Psitwo^{n-1}\L\Psitwo& \le -c\Psitwo^{n-1}\big(\|v\|^2_{H^1}+\|Au+\f'(v)\|^2_H\big)\\
&\qquad -c\Psitwo^{n-1+\gamma}+C\Psitwo^{n-1}.
\end{align*}
Reasoning as in the base case $n=1$, we also have
\begin{align*}
\Psitwo\ge  c\big(\|u\|^2_{H^2}+\|v\|^2_{H^1}\big)  
&\ge c\big(\|u\|^2_{H^2}+\|v\|^2_{H^1}+\|\f'(v)\|^{2\gamma}_H\big)-C \\
&\ge c \big(\|v\|^2_{H^1}+\|Au+\f'(v)\|^2_H\big)^{\gamma}-C.
\end{align*}
It follows that
\begin{align*}
n\Psitwo^{n-1}\L\Psitwo& \le -c\Psitwo^{n-2}\big(\|v\|^2_{H^1}+\|Au+\f'(v)\|^2_H\big)^{1+\gamma}\\
&\qquad -c\Psitwo^{n-1+\gamma}+C\Psitwo^{n-1}+C\Psitwo^{n-2},
\end{align*}
whence
\begin{align} \label{ineq:L.Psitwo^n:a}
n\Psitwo^{n-1}\L\Psitwo& \le -c\Psitwo^{n-2}\big(\|v\|^2_{H^1}+\|Au+\f'(v)\|^2_H\big)^{1+\gamma}-c\Psitwo^{n-1+\gamma}+C.
\end{align}
With regard to the second term on the right--hand side of~\eqref{eqn:L.Psi_2^n}, we invoke Cauchy--Schwarz inequality to obtain
\begin{align}
\tfrac{1}{2}
&n(n-1)\Psitwo^{n-2}\sum_{k\ge 1}\lambda_k^2\big|\la 2Av+Au+\f'(v),e_k\ra_H\big|^2 \nt \\
&\le n(n-1)\Psitwo^{n-2}\Tr(QQ^*)\big(4\|v\|^2_{H^1}+\|Au+\f'(v)\|^2_H\big) \nt \\
&\le \tfrac{1}{2}c\Psitwo^{n-2}\big(\|v\|^2_{H^1}+\|Au+\f'(v)\|^2_H\big)^{1+\gamma}+C\Psitwo^{n-2}.\label{ineq:L.Psitwo^n:b}
\end{align}
We now combine~\eqref{eqn:L.Psi_2^n} together with~\eqref{ineq:L.Psitwo^n:a} and \eqref{ineq:L.Psitwo^n:b} to deduce
\begin{align*}
\L \Psitwo^n  \le-c\Psitwo^{n-1+\gamma}+C.
\end{align*}
By It\^o's formula, the above produces~\eqref{ineq:moment-bound:H^2} for the general case $n\ge2$, thereby concluding the proof.

\end{proof}

\section{Ergodicity of \eqref{eqn:wave}}  \label{sec:ergodicity}

\subsection{Proof of the main results} \label{sec:ergodicity:proof-of-theorem} In this subsection, we prove the main results while making use of the moment estimates established in Section~\ref{sec:moment-bound}. 

We start with Proposition~\ref{prop:moment-bound}, which is consequence of Lemma~\ref{lem:moment-bound:H^2}. The proof of Proposition~\ref{prop:moment-bound} is quite standard and can be found in literature. Since the argument is short, we include it here for the sake of completeness. 

\begin{proof}[Proof of Proposition \ref{prop:moment-bound}]
Let $\nu_t\in \Pcal(\Hcal^1)$ be the time--average measure defined as
\begin{align*}
\nu_t(\cdot)=\frac{1}{t}\int_0^t P_s(0, \cdot )\d s.
\end{align*}
By the Krylov--Bogoliubov procedure as well as the uniqueness of $\nu$, it is known that $\nu_t$ converges weakly to $\nu$. With regard to the moment bound~\eqref{ineq:moment-bound:nu}, for $R>0$, we invoke~\eqref{ineq:moment-bound:H^2} to obtain the bound
\begin{align*}
\frac{1}{t}\int_0^t \E \big[\Psitwo(u(s),v(s))^{n-1+\gamma}\mi R\big]\d s\le C, \quad t>0,
\end{align*}
for some positive constant $C$ independent of $R$. Since the above left--hand side converges to $\nu(\Psitwo^{n-1+\gamma}\mi R)$ as $t\to\infty$, we deduce
\begin{align*}
\nu(\Psitwo^{n-1+\gamma}\mi R) \le C.
\end{align*}
By virtue of the Monotone Convergence Theorem, we obtain $$\nu(\Psitwo^{n-1+\gamma})<\infty,$$
implying 
\begin{align*}
\int_{\Hcal^1} \big(\|u\|^2_{H^2}+\|v\|^2_{H^1}+\|v\|^{\lambda+1}_{L^{\lambda+1}}\big)^n\nu(\d u,\d v)<\infty,
\end{align*}
thanks to the fact that $\Psitwo(u,v)$ dominates $\|u\|^2_{H^2}+\|v\|^2_{H^1}+\|v\|^{\lambda+1}_{L^{\lambda+1}}$. Since this holds for all $n\in\nbb$, we conclude~\eqref{ineq:moment-bound:nu}, as claimed.
\end{proof}

We now turn to Theorem~\ref{thm:polynomial-mixing}. For the reader's convenience, we recall the notions of \emph{contracting} and \emph{d--small} sets below:
\begin{definition} \label{defn:contracting-d-small}
1. A Wasserstein distance $\W_d$ is called contracting for $P_t$ if the following holds \cite{butkovsky2014subgeometric,butkovsky2020generalized,kulik2015generalized}
\begin{align*}
\W_d\big(P_t(U,\cdot),P_t(\Ut,\cdot)\big) \le d(U,\Ut),\quad t\ge 0,
\end{align*}
for all $U,\Ut\in\Hcal^1$.

2. A set $A\in\Hcal^1$ is called $d$-- \emph{small} for $P_t$ if there exists $\rho=\rho(t,A)\in(0,1)$ such that \cite[Definition 2.2]{butkovsky2014subgeometric} 
\begin{align*}
\W_d\big(P_t(U,\cdot),P_t(\Ut,\cdot)\big) \le (1-\rho)d(U,\Ut), \quad U,\Ut\in A.
\end{align*}
\end{definition}

\begin{remark} \label{rem:contracting} We note that the notion of contracting in Definition~\ref{defn:contracting-d-small}, part 1, is similar to \cite[Condition (2.2)]{butkovsky2014subgeometric}. In particular, it is weaker than the usual notion of contracting found in \cite[Definition 2.1]{butkovsky2020generalized} and \cite[Definition 4.6]{hairer2011asymptotic}, where it is required that
\begin{align*}
\W_d\big(P_t(U,\cdot),P_t(\Ut,\cdot)\big) \le \alpha d(U,\Ut),
\end{align*}
for $\alpha<1$ whenever $d(U,\Ut)<1$.
\end{remark}

In order to actual measure the convergence rate via Lyapunov functions, we introduce the function
\begin{align} \label{form:psi_n}
\g_n(x)=|x|^{\frac{n-1+\gamma}{n}},\quad x\in\rbb,
\end{align}
defined for $n\ge1$ and $\gamma\in(0,1/\lambda)$ as in~\eqref{ineq:moment-bound:H^2}. Three of the main ingredients to prove Theorem~\ref{thm:polynomial-mixing} are given below in Proposition~\ref{prop:ergodicity} whose proof is deferred to Section~\ref{sec:ergodicity:auxiliary}.

\begin{proposition}  \label{prop:ergodicity}
1. (Lyapunov functions) Let $\Psitwo$ be defined in~\eqref{form:Phi}. Then,
\begin{align} \label{ineq:Lyapunov:Phi_n}
P_t \Psitwo^n(U) \le \Psitwo(U)^n -c\int_0^t P_s (\g_n\ci\Psitwo^n)(U)\emph{d}s+Ct, \quad U\in\Hcal^2,\, t\ge 0,
\end{align}
where $\g_n$ is as in~\eqref{form:psi_n}, $c$ and $C$ are positive constants independent of $U$ and $t$.

2. (Contracting property) For all $t\ge 0$, 
\begin{align} \label{ineq:contracting:W_d(U_0,Utilde_0)<d(U_0,Utilde_0)}
\W_{d}\big( P_t(U_0,\cdot),P_t(\Ut_0,\cdot) \big) \le d(U_0,\Ut_0),\quad U_0,\Ut_0\in\Hcal^1,
\end{align}
where $d(U_0,\Ut_0)=\|U_0-\Ut_0\|_{\Hcal^1}^2$ as in~\eqref{form:d}, and $\W_d$ is the Wasserstein distance associated with $d$.

3. (d--small sets) There exists $t_0>0$ such that for all $R>0$, $t\ge t_0$,
\begin{align} \label{ineq:contracting:W_d(U_0,Utilde_0)<(1-rho)d(U_0,Utilde_0)}
\W_{d}\big( P_t(U_0,\cdot),P_t(\Ut_0,\cdot) \big) \le (1-\rho)d(U_0,\Ut_0),
\end{align}
holds for a positive constant $\rho=\rho(R,t)$ and for all $U_0,\Ut_0\in B_{n}^R $ where
\begin{align} \label{form:B_n^R}
B^R_{n}=\{U\in\Hcal^2:\Phi(U)^n\le R\}.
\end{align} 

\end{proposition}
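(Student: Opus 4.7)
The plan is to address the three assertions in turn. For Part 1 the result is essentially an immediate reformulation of Lemma~\ref{lem:moment-bound:H^2}: observing that by definition~\eqref{form:psi_n} one has $\g_n\ci\Psitwo^n(U)=\Psitwo(U)^{n-1+\gamma}$, the energy inequality~\eqref{ineq:moment-bound:H^2} translates directly via $P_tf(U)=\E[f(U(t;U))]$ and (non-negative) Fubini into~\eqref{ineq:Lyapunov:Phi_n}, with $c=c_{n,\gamma}$ and $C=C_{n,\gamma}$.

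For Part 2 I would couple $U(t;U_0)$ and $U(t;\Ut_0)$ synchronously, i.e.\ realize them on the same probability space driven by a common cylindrical Wiener process $w(t)$. Under this coupling, Lemma~\ref{lem:moment-bound:ubar}, part 1 gives the pathwise bound $d(U(t;U_0),U(t;\Ut_0))\le d(U_0,\Ut_0)$ for every $t\ge 0$. Since $\W_d$ is by definition dominated by the expected cost of any coupling, taking expectation produces~\eqref{ineq:contracting:W_d(U_0,Utilde_0)<d(U_0,Utilde_0)}.

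Part 3 is the technical core. Using the same synchronous coupling, I would introduce the modified functional
\[
D^\varepsilon(U,\Ut)=\|u-\ut\|^2_{H^1}+\|v-\vt\|^2_H+\varepsilon\la u-\ut,v-\vt\ra_H,
\]
which by Cauchy--Schwarz and the Poincar\'e inequality satisfies $(1-C\varepsilon)\,d(U,\Ut)\le D^\varepsilon(U,\Ut)\le (1+C\varepsilon)\,d(U,\Ut)$ whenever $\varepsilon$ is small enough. The refined estimate~\eqref{ineq:moment-bound:|ubar|:exponential-decay} then yields
\[
\E[D^\varepsilon(U(t),\Ut(t))]\le D^\varepsilon(U_0,\Ut_0)(1+C\varepsilon^2)\exp\{\varepsilon^2 C(\|U_0\|^2_{\Hcal^1}+\|\Ut_0\|^2_{\Hcal^1})-\tfrac{1}{8}\varepsilon t\}.
\]
A direct inspection of~\eqref{form:Phi} shows that $\Psitwo$ dominates $\|\cdot\|^2_{\Hcal^1}$ up to a constant, so for $U_0,\Ut_0\in B^R_n$ one has $\|U_0\|^2_{\Hcal^1}+\|\Ut_0\|^2_{\Hcal^1}\le CR^{1/n}$. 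The strategy is then to pick $\varepsilon=\varepsilon(R)$ small enough that the exponent $\varepsilon^2 C\cdot CR^{1/n}$ and the prefactor $(1+C\varepsilon)(1+C\varepsilon^2)/(1-C\varepsilon)$ are uniformly bounded, and then take $t$ large enough that the linear decay $\tfrac{1}{8}\varepsilon t$ pushes the whole product strictly below $1$. Combined with the two-sided equivalence of $D^\varepsilon$ and $d$ and the coupling bound on $\W_d$, this produces~\eqref{ineq:contracting:W_d(U_0,Utilde_0)<(1-rho)d(U_0,Utilde_0)} for some $\rho=\rho(R,t)\in(0,1)$.

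The hardest part will be Part 3: the factor $e^{\varepsilon^2 C(\|U_0\|^2_{\Hcal^1}+\|\Ut_0\|^2_{\Hcal^1})}$ grows with $R$, so damping it forces $\varepsilon\to 0$ as $R\to\infty$, which in turn weakens the useful decay rate $\varepsilon/8$. The main book-keeping is in arranging a simultaneous choice of $\varepsilon$ and $t$ so that a genuine contraction $(1-\rho)<1$ emerges; this is exactly the role of the $\varepsilon^2$ prefactor on the exponent in~\eqref{ineq:moment-bound:|ubar|:exponential-decay}, which is what makes the scheme work and motivates the author's emphasis in the remark following Lemma~\ref{lem:moment-bound:ubar}.
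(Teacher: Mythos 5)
Your proposal is correct and follows essentially the same route as the paper: Part 1 is Lemma~\ref{lem:moment-bound:H^2} restated, Part 2 is the synchronous coupling combined with~\eqref{ineq:moment-bound:|ubar|^2<|u_0|^2}, and Part 3 reproduces the paper's intermediate Lemma~\ref{lem:contracting:W_d} (two-sided equivalence of the $\varepsilon$-modified functional with $d$ plus~\eqref{ineq:moment-bound:|ubar|:exponential-decay}) followed by the same choice of $\varepsilon=\varepsilon(R)$ small and $t$ large. The only point worth stating explicitly is that, because the decay term $-\tfrac{1}{8}\varepsilon t$ scales with the same $\varepsilon$ that is shrunk to tame $C\varepsilon^2 R^{1/n}$, the threshold $t_0$ can be taken independent of $R$ with only $\rho=\rho(R,t)$ degrading as $R\to\infty$, exactly as the proposition requires.
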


\begin{remark}
We note that while the contracting property~\eqref{ineq:contracting:W_d(U_0,Utilde_0)<d(U_0,Utilde_0)} holds in $\Hcal^1$, the Lyapunov estimate~\eqref{ineq:Lyapunov:Phi_n} as well as the existence of $d$--small sets only hold in $\Hcal^2$, hence the convergent rate result, cf. Theorem~\ref{thm:polynomial-mixing}, for every initial data $U\in\Hcal^2$.
\end{remark}

Assuming the results of Proposition~\ref{prop:ergodicity}, we turn to the polynomial mixing stated in Theorem~\ref{thm:polynomial-mixing}. As mentioned in Section~\ref{sec:result:main-result}, we are not able to directly employ \cite[Theorem 2.4]{butkovsky2014subgeometric} owing to the fact that $d(U,\Ut)=\|U-\Ut\|^2_{\Hcal^1}$ is unbounded. It is thus necessary to modify the argument from \cite{butkovsky2014subgeometric} tailored to our settings. More specifically, Theorem~\ref{thm:polynomial-mixing} is established via a series of lemmas below, of which, the first result (in Lemma~\ref{lem:Lyapunov:W}) relies on Lyapunov structure~\eqref{ineq:Lyapunov:Phi_n}. The proof of Lemma~\ref{lem:Lyapunov:W} can be found in \cite[Theorem 2.4]{butkovsky2014subgeometric} and thus omitted.

\begin{lemma} \label{lem:Lyapunov:W}
Let $\g_n$ and $\Phi$ respectively be as in~\eqref{form:psi_n} and \eqref{form:Phi}. Then, there exists a function $W_n:\Hcal^2\to [1,\infty)$ such that
\begin{align} \label{cond:W_n-vs-Phi_n}
\g_n( \Phi(U)^n) \le  W_n(U) \le C\Phi(U)^n+C,\quad U\in\Hcal^2,
\end{align}
and
\begin{align} \label{cond:W_n}
P_{t_0}W_n(U)\le W_n(U)-\g_n(K_1 W_n(U))+K_2 \quad U\in\Hcal^2.
\end{align}
In the above, $t_0$ is the time constant as in Proposition~\ref{prop:ergodicity}, part 3, $C$, $K_1$ and $K_2$ are positive constants independent of $U$.
\end{lemma}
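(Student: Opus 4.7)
The plan is to follow the construction of \cite[Theorem 2.4]{butkovsky2014subgeometric}, taking $W_n(U):=\Phi(U)^n+1$ and verifying the two required conditions directly from~\eqref{ineq:Lyapunov:Phi_n} (after noting that $\g_n(\Phi^n)=\Phi^{n-1+\gamma}$). The sandwich~\eqref{cond:W_n-vs-Phi_n} is essentially free: since $\g_n(x)=x^{(n-1+\gamma)/n}\le x+1$ on $[0,\infty)$, one has $\g_n(\Phi^n(U))\le\Phi^n(U)+1=W_n(U)\le\Phi^n(U)+1$, while $W_n\ge 1$ by construction.

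For the discrete drift condition~\eqref{cond:W_n}, I would apply~\eqref{ineq:Lyapunov:Phi_n} at $t=t_0$ (the time from Proposition~\ref{prop:ergodicity}, part 3) to obtain
\[
P_{t_0}W_n(U)\;\le\;W_n(U)-c\int_0^{t_0}P_s\Phi^{n-1+\gamma}(U)\,\d s+Ct_0,
\]
thereby reducing the task to establishing the reverse bound
\begin{equation}\label{eq:wn-proposal}
\int_0^{t_0}P_s\Phi^{n-1+\gamma}(U)\,\d s\;\ge\; c'\,\Phi^{n-1+\gamma}(U)-C',\qquad U\in\Hcal^2.
\end{equation}
Assuming~\eqref{eq:wn-proposal}, \eqref{cond:W_n} then follows by combining the two displays with the elementary observation $\g_n(K_1 W_n)\le 2K_1^{(n-1+\gamma)/n}(\Phi^{n-1+\gamma}+1)$ (valid since $W_n=\Phi^n+1$), shrinking $K_1$ to absorb the prefactor $cc'$ and enlarging $K_2$ to absorb all additive constants.

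To obtain~\eqref{eq:wn-proposal}, I would exploit the subadditivity of the concave function $\g_n$ (which satisfies $\g_n(0)=0$): for every $s\ge 0$,
\[
\g_n(\Phi^n(U))\;\le\;\g_n(\Phi^n(U(s)))+\g_n\bigl(|\Phi^n(U(s))-\Phi^n(U)|\bigr).
\]
Taking expectation and applying Jensen to the concave $\g_n$ in the last term gives
\[
\Phi^{n-1+\gamma}(U)\;\le\;P_s\Phi^{n-1+\gamma}(U)+\g_n\bigl(\E\,|\Phi^n(U(s))-\Phi^n(U)|\bigr),
\]
so it suffices to produce a short-time increment bound of the form $\E|\Phi^n(U(s))-\Phi^n(U)|\le Cs(\Phi^{n-1+\gamma}(U)+1)+C\sqrt{s}\,\Phi^{n-1+\gamma}(U)$. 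This is obtained from the It\^o decomposition of $\Phi^n(U(\cdot))$: the bounded-variation part is controlled by $|\L\Phi^n|\lesssim\Phi^{n-1+\gamma}+1$, shown along the lines of the computation leading to~\eqref{ineq:L.Psi_2:b}, while the martingale part is controlled by its quadratic variation using the explicit form of $D_v\Phi^n$ together with the trace condition~\eqref{cond:Q:H^4}. Integrating over $s\in[0,t_0]$ and using the sublinearity $(n-1+\gamma)/n<1$ produces~\eqref{eq:wn-proposal}.

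The main obstacle is the last step, where one must convert the moment bounds of Lemma~\ref{lem:moment-bound:H^2} into a short-time increment estimate sharp enough that $\g_n(\E|\Phi^n(U(s))-\Phi^n(U)|)$ is $o(\Phi^{n-1+\gamma}(U))$ uniformly in $U$. The argument depends crucially on the strict sublinearity $\g_n(x)/x\to 0$, equivalently on the restriction $\gamma<1/\lambda$ in Assumption~\ref{cond:phi}, which is precisely what makes~\eqref{ineq:Lyapunov:Phi_n} ``subgeometric'' in the sense of \cite{butkovsky2014subgeometric}. If necessary, $t_0$ may be shrunk further (while preserving the $d$--small property~\eqref{ineq:contracting:W_d(U_0,Utilde_0)<(1-rho)d(U_0,Utilde_0)}) to secure a factor of $\tfrac12$ on the dominant term, after which the constants $K_1,K_2$ are read off as above.
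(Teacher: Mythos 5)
Your choice $W_n=\Phi^n+1$ makes the sandwich \eqref{cond:W_n-vs-Phi_n} immediate, but it shifts all the difficulty onto the drift condition \eqref{cond:W_n}, which with this choice genuinely requires your reverse bound $\int_0^{t_0}P_s\Phi^{n-1+\gamma}(U)\,\d s\ge c'\,\Phi^{n-1+\gamma}(U)-C'$ --- and this is where the argument breaks. Your route to that bound rests on the claim $|\L\Phi^n|\lesssim\Phi^{n-1+\gamma}+1$, which is false: from \eqref{eqn:L.Psi_2} the generator $\L\Phi$ contains the dissipative terms $-2\|v\|^2_{H^1}-\|Au+\f'(v)\|^2_H-\la v,\f'(v)\ra_H$, which are of order $-\Phi$ at best and, since $\|\f'(v)\|_H^2\lesssim\|v\|_{H^1}^{2\lambda}+1$, may be as negative as $-C\Phi^{\lambda}$. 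The exponent $\gamma<1/\lambda$ in \eqref{ineq:Lyapunov:Phi_n} appears only because one can \emph{extract} merely $\|Au\|_H^{2\gamma}$ from this dissipation (the step after \eqref{ineq:L.Psi_2}); it is not a bound on the size of the drift itself. Consequently the short--time increment is only controlled by $\E|\Phi^n(U(s))-\Phi^n(U)|\lesssim s\,\Phi(U)^{n-1+\lambda}+\cdots$, and $\g_n\big(s\,\Phi^{n-1+\lambda}\big)\sim s^{p_n}\Phi^{(n-1+\lambda)p_n}$ with $p_n=\tfrac{n-1+\gamma}{n}$ satisfies $(n-1+\lambda)p_n>n-1+\gamma$ for every $\lambda>1$, so the error term is \emph{not} $o(\Phi^{n-1+\gamma}(U))$ uniformly in $U$. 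The danger is real, not just technical: for the toy dynamics $\dot\Phi=-\Phi^{\lambda}$ with $\lambda>1$ one computes $\int_0^{t_0}\Phi(s)^{q}\,\d s=o(\Phi(0)^{q})$ as $\Phi(0)\to\infty$, so a superlinear dissipation rules out the reverse bound altogether. Finally, the escape hatch of shrinking $t_0$ is not available: the $d$--smallness \eqref{ineq:contracting:W_d(U_0,Utilde_0)<(1-rho)d(U_0,Utilde_0)} is obtained in the proof of Proposition~\ref{prop:ergodicity} only for $t_0$ chosen \emph{large} (there $t_0\ge\max\{16C,1\}$).

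The paper sidesteps all of this by not taking $W_n=\Phi^n+1$. As indicated in Remark~\ref{rem:W_n}, following \cite[Theorem 2.4]{butkovsky2014subgeometric} one sets $\sigma_{n,R}=\inf\{m\in\nbb: U(mt_0;U_0)\in B_n^R\}$ and
\begin{equation*}
W_n(U_0)=\E\Big[\sum_{i=0}^{\sigma_{n,R}}\Phi\big(U(it_0;U_0)\big)^n\Big]+1 .
\end{equation*}
For this $W_n$ the one--step inequality \eqref{cond:W_n} follows from the Markov property (peeling off the $i=0$ term and using that the sum restarts unless the chain has entered $B_n^R$), while the upper bound in \eqref{cond:W_n-vs-Phi_n} follows from the discrete drift inequality implied by \eqref{ineq:Lyapunov:Phi_n} via the standard comparison/Dynkin argument for return times. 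In particular no lower bound on $P_s\Phi^{n-1+\gamma}$ is ever needed. If you want to keep a self--contained proof, you should reproduce that construction rather than attempt the reverse estimate.
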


\begin{remark} \label{rem:W_n}
Following closely the proof of \cite[Theorem 2.4]{butkovsky2014subgeometric}, $W_n$ is actually constructed via the returning time to $B_n^R$ defined in \eqref{form:B_n^R}. More specifically, given $U_0\in\Hcal^2$, denote
\begin{align*}
\sigma_{n,R} =\inf\{m\in\nbb:U(mt_0;U_0)\in B^R_n\}.
\end{align*}
As a consequence of the energy estimate in Lemma \ref{lem:moment-bound:H^2}, by employing an argument similar to the proof of \cite[Lemma 4.7]{butkovsky2014subgeometric}, it can be shown that $\sigma_{n,R}$ is finite a.s. See also \cite[Proposition 22]{fort2005subgeometric}. Then, there exists $R=R(t_0)$ sufficiently large such that
\begin{align*}
W_n(U_0)=\E \Big[\sum_{i=0}^{\sigma_{n,R}}\Phi\big(U(it_0;U_0)\big)^n\Big],
\end{align*}
satisfies~\eqref{cond:W_n-vs-Phi_n} and \eqref{cond:W_n}. To ensure $W_n\in[1,\infty)$, we simply replace $W_n$ by $W_n+1$.
\end{remark}

Next, we establish a contracting property for $P_{t_0}$ making use of Lemma~\ref{lem:Lyapunov:W}. For this purpose, letting $W_n$ be the function as in Lemma~\ref{lem:Lyapunov:W}, we introduce the distance--like function $\dtbn:\Hcal^2\times\Hcal^2\to[0,\infty)$ for $\beta>0$ and $n\in\nbb$ given by
\begin{align} \label{form:dtilde_beta}
\dtbn(U,\Ut) = \sqrt{ d(U,\Ut)\big[1+\beta \g_n\big(W_n(U)+W_n(\Ut)  \big)   \big]},\quad U,\Ut\in\Hcal^2,
\end{align}
where $\g_n$ is defined in~\eqref{form:psi_n}.

\begin{lemma} \label{lem:W_dtilde:contracting:t_0}
Let $\dtbn$ be defined in~\eqref{form:dtilde_beta}, $t_0$ be the time constant as in Proposition~\ref{prop:ergodicity}, part 3, and $W_n$ be the function from Lemma~\ref{lem:Lyapunov:W}. Then, for all $n\ge 4$, there exists $\beta, c^*_n$ and $C^*_n>0$ such that the following holds
\begin{align} \label{ineq:W_dtilde:contracting:t_0}
&\W_{\dtbn}\big(P_{t_0}\nu_1,P_{t_0}\nu_2\big)  \nt \\
&\le 
\Big[ 1-c_n^*\g_n'\Big(\g_n^{-1}\Big( C_n^* \big[\nu_1( \g_n\o W_n)+\nu_2(\g_n\o W_n)\big]^{\frac{4}{3}} \big[\W_{\dtbn}(\nu_1,\nu_2)\big]^{-\frac{4}{3}} \Big) \Big)\Big]
\W_{\dtbn}\big(\nu_1,\nu_2\big),
\end{align}
for all $\nu_1\neq\nu_2\in \Pcal_{\g_n\ci \Phi^n}(\Hcal^2)$ where $\g_n$ and $\Phi$ are respectively defined in~\eqref{form:psi_n} and \eqref{form:Phi}.
\end{lemma}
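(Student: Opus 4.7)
The plan is to construct an explicit coupling of $P_{t_0}(U,\cdot)$ and $P_{t_0}(\Ut,\cdot)$ that alternates between the global contracting coupling from Proposition~\ref{prop:ergodicity}, part 2, and the d-small coupling from Proposition~\ref{prop:ergodicity}, part 3, and then use Cauchy--Schwarz together with the Lyapunov estimate from Lemma~\ref{lem:Lyapunov:W} to pass the bound on $d=\|\cdot\|_{\Hcal^1}^2$ to the weighted distance $\dtbn$. First, I would fix an almost optimal coupling $\pi^*$ of $(\nu_1,\nu_2)$ with $\int\dtbn(U,\Ut)\d\pi^*\le \W_{\dtbn}(\nu_1,\nu_2)+\varepsilon$, and introduce a threshold $M>0$ to be chosen later. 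Set $E_M:=\{(U,\Ut):W_n(U)+W_n(\Ut)\le M\}$; by the first inequality in \eqref{cond:W_n-vs-Phi_n}, on $E_M$ both $U,\Ut$ lie in $B^{R}_n$ with $R=\g_n^{-1}(M)$, so there I use the d-small coupling with $\E[d(X_1,X_2)\mid U,\Ut]\le(1-\rho(R,t_0))d(U,\Ut)$, while off $E_M$ I use the global contracting coupling with $\E[d(X_1,X_2)\mid U,\Ut]\le d(U,\Ut)$.

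Once this coupling is fixed, the identity $\dtbn(X_1,X_2)=\|X_1-X_2\|_{\Hcal^1}\sqrt{1+\beta\g_n(W_n(X_1)+W_n(X_2))}$ together with a conditional Cauchy--Schwarz yields
\begin{align*}
\E[\dtbn(X_1,X_2)\mid U,\Ut]\le \sqrt{\E[d(X_1,X_2)\mid U,\Ut]}\cdot\sqrt{\E[1+\beta\g_n(W_n(X_1)+W_n(X_2))\mid U,\Ut]}.
\end{align*}
The second factor is handled through Lemma~\ref{lem:Lyapunov:W}: the estimate $P_{t_0}W_n\le W_n-\g_n(K_1 W_n)+K_2$, combined with the concavity of $\g_n$ (valid since $n\ge 2$) via Jensen's inequality and the subadditivity $\g_n(a+b)\le \g_n(a)+\g_n(b)$ coming from $\g_n(0)=0$, gives
\begin{align*}
\E[\g_n(W_n(X_1)+W_n(X_2))\mid U,\Ut]\le \g_n(W_n(U)+W_n(\Ut))+\g_n(2K_2).
\end{align*}
Taking $\beta$ small enough that the $\beta\g_n(2K_2)$ error is a lower--order perturbation and using $\sqrt{1-a}\le 1-a/2$, the conditional bound reduces to $\E[\dtbn(X_1,X_2)\mid U,\Ut]\le \bigl(1-\tfrac{1}{2}\rho(\g_n^{-1}(M),t_0)\mathbf{1}_{E_M}\bigr)\dtbn(U,\Ut)$ up to a controllable remainder.

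Integrating against $\pi^*$ and writing $\E_{\pi^*}[\dtbn\mathbf{1}_{E_M}]=\W_{\dtbn}(\nu_1,\nu_2)-\E_{\pi^*}[\dtbn\mathbf{1}_{E_M^c}]+O(\varepsilon)$, the tail is controlled by Chebyshev,
\begin{align*}
\pi^*(E_M^c)\le \frac{\nu_1(\g_n\o W_n)+\nu_2(\g_n\o W_n)}{\g_n(M)},
\end{align*}
followed by H\"older's inequality with conjugate pair $(4,4/3)$ applied to $\dtbn\mathbf{1}_{E_M^c}$; the exponent $p=4$ is forced by the square--root structure of $\dtbn$ over the quadratic $d$, and the required fourth moment of $\dtbn$ under $\pi^*$ is supplied by Lemma~\ref{lem:moment-bound:H^2} (which requires $n\ge 4$). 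This gives a tail bound of the form $\E_{\pi^*}[\dtbn\mathbf{1}_{E_M^c}]\le C\W_{\dtbn}(\nu_1,\nu_2)^{-1/3}[\nu_1(\g_n\o W_n)+\nu_2(\g_n\o W_n)]^{4/3}\g_n(M)^{-1}$. Balancing this tail against the main--term gain $\tfrac{1}{2}\rho(\g_n^{-1}(M),t_0)\W_{\dtbn}(\nu_1,\nu_2)$, and tracking the dependence of $\rho$ on $R$ built into the d-small argument (which, via the construction of $W_n$ from returning times in Remark~\ref{rem:W_n}, effectively identifies the local rate with $\g_n'$), the optimal choice is $M^*=\g_n^{-1}\bigl(C_n^*[\nu_1(\g_n\o W_n)+\nu_2(\g_n\o W_n)]^{4/3}\W_{\dtbn}(\nu_1,\nu_2)^{-4/3}\bigr)$, which produces the claimed factor $c_n^*\g_n'(M^*)$ in \eqref{ineq:W_dtilde:contracting:t_0}.

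The main obstacle is that the unboundedness of $d=\|\cdot\|_{\Hcal^1}^2$ prevents a direct invocation of \cite[Theorem 2.4]{butkovsky2014subgeometric}, so the trade--off between the d-small contraction and the Lyapunov drift must be rebalanced at the level of the second moment of $\dtbn$; this is what forces the specific $4/3$--exponent in the statement, and where the $\Hcal^2$--moment bounds from Section~\ref{sec:moment-bound} are essential. A secondary difficulty is that the additive constant $K_2$ from the Lyapunov step propagates through the concave $\g_n$ and must be absorbed by choosing $\beta$ sufficiently small, which explains the smallness requirement on $\beta$ in the lemma.
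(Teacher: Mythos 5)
Your overall architecture (Cauchy--Schwarz to split $\dtbn$ into the $d$--part and the weight part, a threshold on $W_n(U)+W_n(\Ut)$, $d$--smallness on the bounded region, a tail estimate on the complement) is in the right family, but two steps do not go through as written, and both are exactly where the paper's proof does something different.

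First, the source of the contraction factor $\g_n'$. You use a single, $\nu$--dependent threshold $M^*$ and take the main--term gain to be the $d$--small rate $\rho(\g_n^{-1}(M^*),t_0)$ on all of $E_{M^*}$, asserting that tracking the $R$--dependence of $\rho$ "effectively identifies the local rate with $\g_n'$". This is not justified and is in fact false asymptotically: in the proof of Proposition~\ref{prop:ergodicity}, part 3, the parameter $\varepsilon$ must be taken of order $R^{-1/n}$ (to satisfy $CR^{1/n}\varepsilon<\tfrac{1}{16}$), so $\rho(R,t_0)\sim R^{-1/n}$; with $R=\g_n^{-1}(M)$ this gives $\rho\sim M^{-1/(n-1+\gamma)}$, which is strictly smaller than $\g_n'(M)\sim M^{-(1-\gamma)/n}$ for large $M$ (since $(1-\gamma)(n-1+\gamma)<n$). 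So your main--term gain is asymptotically weaker than the claimed rate. The paper avoids this by keeping the $d$--small radius $R$ \emph{fixed} (so $\rho_1$ is a universal constant) and inserting a third, intermediate regime $R<W_n(U_0)+W_n(\Ut_0)\le N$, in which the factor $1-\rho_2\g_n'(N)$ is extracted from the Lyapunov \emph{drift} $P_{t_0}W_n\le W_n-\g_n(K_1W_n)+K_2$ acting on the weight $\sqrt{1+\beta\g_n(W_n+W_n)}$ (via $\g_n(x-\g_n(K_1x)+2K_2)\le\g_n(x)(1-[\g_n(K_1)-2K_2/\g_n(R)]\g_n'(N))$ for $x\in(R,N]$). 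That intermediate case is what produces $\g_n'$ in \eqref{ineq:W_dtilde:contracting:t_0}; in your two--regime decomposition you instead discard the drift by bounding $\E[\g_n(W_n(X_1)+W_n(X_2))]$ by $\g_n(W_n(U)+W_n(\Ut))+\g_n(2K_2)$, which gives away exactly the gain you need.

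Second, the tail term. Your H\"older step with conjugate pair $(4,4/3)$ requires $\E_{\pi^*}[\dtbn^{4}]<\infty$, which amounts to $\nu_i(\Phi^{2n+2\gamma})<\infty$; but the lemma only assumes $\nu_1,\nu_2\in\Pcal_{\g_n\circ\Phi^n}(\Hcal^2)$, i.e., finiteness of $\nu_i(\Phi^{n-1+\gamma})$, and Lemma~\ref{lem:moment-bound:H^2} provides moments of the solution started from a fixed point, not moments of arbitrary input measures. The paper's substitute is the pointwise domination $\dtbn(U,\Ut)\le c(1+\beta\g_n(W_n(U)+W_n(\Ut)))^{3/4}$, which follows from $d(U,\Ut)\le c(1+\beta\g_n(W_n(U)+W_n(\Ut)))^{1/2}$ --- this is where the hypothesis $n\ge4$ is actually used, not for fourth moments --- after which Markov/H\"older need only the first moment $\nu_i(\g_n\circ W_n)$, and the $4/3$--exponent emerges from the $3/4$--power when solving for $N$. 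To repair your argument you would need to add the intermediate drift regime and replace the fourth--moment H\"older step by this pointwise bound.
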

\begin{proof}
We follow along the lines of \cite[Lemma 4.3]{butkovsky2014subgeometric} tailored to our settings where $d(U,\Ut)=\|U-\Ut\|_{\Hcal^1}^2$ is unbounded.

By convexity of $\W_{\dtbn}$, cf. \cite[Theorem 4.8]{villani2008optimal}, it holds that
\begin{align} \label{ineq:W_dtilde:contracting:t_0:a}
\W_{\dtbn}\big(P_{t_0}\nu_1,P_{t_0}\nu_2\big) \le\int_{\Hcal^2\times\Hcal^2}  \close \W_{\dtbn}\big(P_{t_0}(U_0,\cdot),P_{t_0}(\Ut_0,\cdot)\big) \pi(\d U_0,\d \Ut_0),
\end{align} 
for any coupling $\pi$ of $(\nu_1,\nu_2)$. We proceed to compare $\W_{\dtbn}\big(P_{t_0}(U_0,\cdot),P_{t_0}(\Ut_0,\cdot)\big)$ with $\dtbn(U_0,\Ut_0)$. Recalling $\dtbn$ from~\eqref{form:dtilde_beta} and letting $(X,Y)$ be a coupling of $\big(P_{t_0}(U_0,\cdot),P_{t_0}(\Ut_0,\cdot)\big)$, we apply Cauchy--Schwarz inequality together with the expression~\eqref{form:W_d} and the concavity of $\g_n$ to see that
\begin{align*}
&\W_{\dtbn}\big(P_{t_0}(U_0,\cdot),P_{t_0}(\Ut_0,\cdot)\big)\le   \E\,\dt_{\beta}(X,Y)\\
&\le \sqrt{\E d(X,Y)}\cdot \sqrt{1+\beta\E\g_n(W_n(X)+W_n(Y))}\\
&\le  \sqrt{\E d(X,Y)}\cdot \sqrt{1+\beta\g_n(\E W_n(X)+\E W_n(Y))}\\
&= \sqrt{\E d(X,Y)}\cdot \sqrt{1+\beta\g_n(P_{t_0} W_n(U_0)+P_{t_0} W_n(\Ut_0))}.
\end{align*}
Since the last implication above holds for all pairs of random variables $(X,Y)$, we invoke~\eqref{form:W_d} again to obtain
\begin{align}
&\W_{\dtbn}\big(P_{t_0}(U_0,\cdot),P_{t_0}(\Ut_0,\cdot)\big) \nt \\
&\le \sqrt{\W_{d}\big(P_{t_0}(U_0,\cdot),P_{t_0}(\Ut_0,\cdot)\big) }\cdot \sqrt{1+\beta\g_n(P_{t_0} W(U_0)+P_{t_0} W(\Ut_0))}.  \label{ineq:W_dtilde:contracting:t_0:b}
\end{align}
Now, let $N>R$ be two large constants and to be chosen later. There are three cases to be considered, depending on the value of $W_n(U_0)+W_n(\Ut_0)$. 

\emph{Case 1}: $W_n(U_0)+W_n(\Ut_0)\le R$. In this case, we invoke~\eqref{cond:W_n-vs-Phi_n} to see that $\max\{\Phi(U_0)^n,\Phi(\Ut_0)^n\}\le \g_n^{-1}(\frac{1}{c_n}R) $. In view of Proposition~\ref{prop:ergodicity}, part 3, cf.~\eqref{ineq:contracting:W_d(U_0,Utilde_0)<(1-rho)d(U_0,Utilde_0)}, there exists $\rho_1=\rho_1(t_0,R)\in(0,1)$ such that
\begin{align*}
\W_{d}\big( P_t(U_0,\cdot),P_t(\Ut_0,\cdot) \big) \le (1-\rho_1)d(U_0,\Ut_0).
\end{align*}
Also, from~\eqref{cond:W_n}, we have
\begin{align*}
P_{t_0} W_n(U_0)+P_{t_0} W_n(\Ut_0) \le W_n(U_0)+W_n(\Ut_0)+2K_2\le R+2K_2. 
\end{align*}
Combined the above estimates with~\eqref{ineq:W_dtilde:contracting:t_0:b} yields
\begin{align*}
\W_{\dtbn}\big(P_{t_0}(U_0,\cdot),P_{t_0}(\Ut_0,\cdot)\big) 
&\le \sqrt{(1-\rho_1)d(U_0,\Ut_0)}\cdot \sqrt{1+\beta\g_n(R+2K_2)}.
\end{align*}
Setting
\begin{align*}
\beta = \frac{\rho_1}{\g_n(R+2K_2)} ,
\end{align*}
we infer (recalling $\dtbn$ in~\eqref{form:dtilde_beta})
\begin{align}
\W_{\dtbn}\big(P_{t_0}(U_0,\cdot),P_{t_0}(\Ut_0,\cdot)\big) 
\le \sqrt{(1-\rho^2_1)d(U_0,\Ut_0)}&\le (1-\tfrac{1}{2}\rho^2_2)\sqrt{d(U_0,\Ut_0)}   \nt \\
&\le (1-\tfrac{1}{2}\rho^2_2)\dtbn(U_0,\Ut_0). \label{ineq:W_dtilde:contracting:t_0:case1}
\end{align}

\emph{Case 2}: $R<W_n(U_0)+W_n(\Ut_0)\le N$. In this case, we once again invoke~\eqref{cond:W_n} and the concavity of $\g_n$ to see that
\begin{align}
&\g_n\big(P_{t_0} W(U_0)+P_{t_0} W(\Ut_0)\big)  \nt \\
  &\le \g_n\big( W_n(U_0)+W_n(\Ut_0)-\g_n\o K_1W_n(U_0)-\g_n\o K_1W_n(\Ut_0)+2K_2  \big)\nt \\
&\le \g_n\big( W_n(U_0)+W_n(\Ut_0)-\g_n( K_1W_n(U_0)+ K_1W_n(\Ut_0))+2K_2  \big). \label{ineq:W_dtilde:contracting:t_0:c}
\end{align}
Picking $R$ sufficiently large enough such that $\g_n(R)>2K_2/\g_n(K_1)$, observe that for $x\in(R,N]$
\begin{align*}
\g_n( x-\g_n(K_1x)+2K_2  )-\g_n(x) \le -\g'(N)(\g_n(K_1x)-2K_2),
\end{align*}
implying (recalling $\g_n$ from~\eqref{form:psi_n})
\begin{align*}
\g_n( x-\g_n(K_1x)+2K_2  ) &\le \g_n(x)\Big( 1-\big[\tfrac{\g_n(K_1x)}{\g_n(x)}-\tfrac{2K_2}{\g_n(x)}\big]\g_n'(N) \Big)\\
&= \g_n(x)\Big( 1-\big[\g_n(K_1)-\tfrac{2K_2}{\g_n(x)}\big]\g_n'(N) \Big)\\
&\le \g_n(x)\Big( 1-\big[\g_n(K_1)-\tfrac{2K_2}{\g_n(R)}\big]\g_n'(N) \Big).
\end{align*}
Since $\lim_{N\to 0}\g_n'(N)=0$, for all $N$ sufficiently large, 
\begin{align*}
 1-\rho_2\g_n'(N):=1-\big[\g_n(K_1)-\tfrac{2K_2}{\g_n(R)}\big]\g_n'(N) \in (0,1).
\end{align*}
From~\eqref{ineq:W_dtilde:contracting:t_0:b} together with~\eqref{ineq:contracting:W_d(U_0,Utilde_0)<d(U_0,Utilde_0)}, we deduce
\begin{align*}
&\W_{\dtbn}\big(P_{t_0}(U_0,\cdot),P_{t_0}(\Ut_0,\cdot)\big) \nt \\
&\le \sqrt{\W_{d}\big(P_{t_0}(U_0,\cdot),P_{t_0}(\Ut_0,\cdot)\big) }\cdot \sqrt{1+\beta\g_n(P_{t_0} W(U_0)+P_{t_0} W(\Ut_0))}\\
&\le \sqrt{d(U_0,\Ut_0) }\cdot \sqrt{1+\beta\g_n(W(U_0)+ W(\Ut_0)) (1-\rho_2\g_n'(N)) }\\
&\le \sqrt{1-\rho_2\g_n'(N)}\sqrt{d(U_0,\Ut_0) }\cdot \sqrt{1+\beta\g_n(W(U_0)+ W(\Ut_0))  }.
\end{align*}
It follows that (recalling $\dtbn$ in~\eqref{form:dtilde_beta})
\begin{align}\label{ineq:W_dtilde:contracting:t_0:case2}
\W_{\dtbn}\big(P_{t_0}(U_0,\cdot),P_{t_0}(\Ut_0,\cdot)\big)\le (1-\tfrac{1}{2}\rho_2\g_n'(N))\dtbn(U_0,\Ut_0).
\end{align}

\emph{Case 3}: $W_n(U_0)+W_n(\Ut_0)> N$. In this case, we combine~\eqref{ineq:W_dtilde:contracting:t_0:b} with~\eqref{ineq:W_dtilde:contracting:t_0:c} and~\eqref{ineq:contracting:W_d(U_0,Utilde_0)<d(U_0,Utilde_0)} to infer
\begin{align*}
&\W_{\dtbn}\big(P_{t_0}(U_0,\cdot),P_{t_0}(\Ut_0,\cdot)\big) \\
&\le \sqrt{\W_{d}\big(P_{t_0}(U_0,\cdot),P_{t_0}(\Ut_0,\cdot)\big) }\cdot \sqrt{1+\beta\g_n(P_{t_0} W(U_0)+P_{t_0} W(\Ut_0))}\\
&\le \sqrt{d(U_0,\Ut_0) }\cdot \sqrt{1+\beta\g_n\big( W_n(U_0)+W_n(\Ut_0)-\g_n(K_1 W_n(U_0)+ K_1W_n(\Ut_0))+2K_2\big) }.
\end{align*}
Since $\g_n(N)>\g_n(R)>2K_2/\g_n(K_1)$, we deduce
\begin{align}\label{ineq:W_dtilde:contracting:t_0:case3}
&\W_{\dtbn}\big(P_{t_0}(U_0,\cdot),P_{t_0}(\Ut_0,\cdot)\big) \nt \\
&\le \sqrt{d(U_0,\Ut_0) }\cdot \sqrt{1+\beta\g_n\big( W_n(U_0)+W_n(\Ut_0)\big) }=\dtbn(U_0,\Ut_0).
\end{align}

Turning back to~\eqref{ineq:W_dtilde:contracting:t_0:a}, we collect~\eqref{ineq:W_dtilde:contracting:t_0:case1}, \eqref{ineq:W_dtilde:contracting:t_0:case2} and \eqref{ineq:W_dtilde:contracting:t_0:case3} to infer
\begin{align}
\W_{\dtbn}\big(P_{t_0}\nu_1,P_{t_0}\nu_2\big)
& \le\int_{\Hcal^2\times\Hcal^2}  \close \W_{\dtbn}\big(P_{t_0}(U_0,\cdot),P_{t_0}(\Ut_0,\cdot)\big) \pi(\d U_0,\d \Ut_0)     \nt \\
&\le \big(1-\tfrac{1}{2}\rho_1^2\mi \tfrac{1}{2} \rho_2 \g_n'(N)\big )\int_{\Hcal^2\times\Hcal^2}  \close \dtbn(U_0,\Ut_0) \pi(\d U_0,\d \Ut_0)  \nt   \\
&\qquad+\tfrac{1}{2}\rho_1^2\mi \tfrac{1}{2} \rho_2\g_n'(N) \int_{_{W_n(U_0)+W_n(\Ut_0)> N}}  \hspace*{-2cm} \dtbn(U_0,\Ut_0) \pi(\d U_0,\d \Ut_0).\label{ineq:W_dtilde:contracting:t_0:d}
\end{align}
In order to estimate the last integral on the above right--hand side, we slightly modify the proof of \cite[Lemma 4.3]{butkovsky2014subgeometric}, owing to the fact that $d(U_0,\Ut_0)=\|U_0-\Ut_0)\|_{\Hcal^1}^2$ is unbounded. To circumvent the difficulty, we note that for $n\ge 4$, (recalling $\g_n$ and $\Phi$ as in~\eqref{form:psi_n} and \eqref{form:Phi}, respectively)
\begin{align}
d(U,\Ut)\le 2\|U\|^2_{\Hcal^1}+2\|\Ut\|^2_{\Hcal^1}&\le c\Psitwo(U)+c\Psitwo(\Ut)   \nt  \\
&\le c+c(\Psitwo(U)+\Psitwo(\Ut))^{\frac{(n-1+\gamma)^2}{2n}}  \nt \\
&\le c+c( \g_n\o\g_n\o \Psitwo(U)+\g_n\o\g_n\o \Psitwo(\Ut))^{\frac{1}{2}}   \nt \\
&\le c\big(1+\beta \g_n( W_n(U)+ W_n(\Ut)) \big)^{\frac{1}{2}}. \label{ineq:d<psi(W)}
\end{align}
In the last implication above, we invoked~\eqref{cond:W_n-vs-Phi_n}. It follows from~\eqref{form:dtilde_beta} that
\begin{align} \label{cond:d-vs-psi_n(W_n)}
\dtbn(U,\Ut)\le c\big(1+\beta \g_n( W_n(U)+ W_n(\Ut)) \big)^{\frac{3}{4}}, \quad U,\Ut\in\Hcal^2,
\end{align}
holds for a positive constant $c=c(\beta,n)$ independent of $U,\Ut$. In particular,~\eqref{cond:d-vs-psi_n(W_n)} implies
\begin{align*}
\int_{_{W_n(U_0)+W_n(\Ut_0)> N}}  \hspace*{-2cm} \dtbn(U_0,\Ut_0) \pi(\d U_0,\d \Ut_0)
&\le c\int_{_{W_n(U_0)+W_n(\Ut_0)> N}}  \hspace*{-2cm}  \big(1+\beta\g_n(W_n(U_0)+ W_n(\Ut_0))\big)^{\frac{3}{4}}\pi(\d U_0,\d \Ut_0),
\end{align*}
where $c>0$ is a positive constant independent of $N$. To bound the above right--hand side, observe that
\begin{align*}
&\int_{_{W_n(U_0)+W_n(\Ut_0)> N}}  \hspace*{-2cm} \big(1+\beta \g_n(W_n(U_0)+ W_n(\Ut_0))\big)^{\frac{3}{4}}\pi(\d U_0,\d \Ut_0)\\
&\le \int_{_{W_n(U_0)+W_n(\Ut_0)> N}}  \hspace*{-2cm} 1+\beta^{\frac{3}{4}}\g_n(W_n(U_0)+ W_n(\Ut_0))^{\frac{3}{4}}\pi(\d U_0,\d \Ut_0)\\
&\le  \Big(\g_n(N)^{-\frac{3}{4}}+\beta^{\frac{3}{4}}\Big)\int_{_{W_n(U_0)+W_n(\Ut_0)> N}}  \hspace*{-2cm} \g_n(W_n(U_0)+ W_n(\Ut_0))^{\frac{3}{4}}\pi(\d U_0,\d \Ut_0)\\
&\le  \Big(K_2^{-\frac{3}{4}}+\beta^{\frac{3}{4}}\Big)\int_{_{W_n(U_0)+W_n(\Ut_0)> N}}  \hspace*{-2cm} \g_n(W_n(U_0)+ W_n(\Ut_0))^{\frac{3}{4}}\pi(\d U_0,\d \Ut_0).
\end{align*}
We invoke Holder and Markov inequalities to further deduce
\begin{align*}
&\int_{_{W_n(U_0)+W_n(\Ut_0)> N}}  \hspace*{-2cm} \g_n(W_n(U_0)+ W_n(\Ut_0))^{\frac{3}{4}}\pi(\d U_0,\d \Ut_0)\\
&\le  \g_n(N)^{-\frac{3}{4}}\int_{\Hcal^2\times\Hcal^2}  \g_n(W_n(U_0)+ W_n(\Ut_0))\pi(\d U_0,\d \Ut_0)\\
&\le \g_n(N)^{-\frac{3}{4}}\Big(\nu_1( \g_n\o W_n)+\nu_2(\g_n\o W_n)\Big).
\end{align*}
It follows that
\begin{align*}
&\int_{_{W_n(U_0)+W_n(\Ut_0)> N}}  \hspace*{-2cm} \dtbn(U_0,\Ut_0) \pi(\d U_0,\d \Ut_0)\\
&\le C(K_1,K_2,\beta,R)\g_n(N)^{-\frac{3}{4}}\big(\nu_1( \g_n\o W_n)+\nu_2(\g_n\o W_n)\big).
\end{align*}
In the above, we emphasize that $C(K_1,K_2,\beta,R)$ does not depend on $N$. Hence, provided $N$ is large enough such that
\begin{align} \label{cond:N:1}
&C(K_1,K_2,\beta,R)\g_n(N)^{-\frac{3}{4}}\big(\nu_1( \g_n\o W_n)+\nu_2(\g_n\o W_n)\big)\le \tfrac{1}{2}\W_{\dtbn}(\nu_1,\nu_2),
\end{align}
and that
\begin{align} \label{cond:N:2}
\g_n'(N)<\rho_1^2/\rho_2,
\end{align}
we have
\begin{align} \label{cond:N:3}
\W_{\dtbn}\big(P_{t_0}\nu_1,P_{t_0}\nu_2\big)
&\le \big(1-\tfrac{1}{4} \rho_2 \g_n'(N)\big )\W_{\dtbn}(\nu_1,\nu_2).
\end{align}
We note that from~\eqref{cond:d-vs-psi_n(W_n)}, 
\begin{align*}
\W_{\dtbn}(\nu_1,\nu_2) \le c\big(1+\beta \big(\nu_1( \g_n\o W_n)+\nu_2(\g_n\o W_n)\big)\big),
\end{align*}
whereas 
\begin{align*}
\nu_1( \g_n\o W_n)+\nu_2(\g_n\o W_n) \ge 2\g_n(1),
\end{align*}
thanks to the fact that $W_n\ge 1$. In other words, it holds that
\begin{align*}
\inf_{\nu_1\neq \nu_2\in \Pcal_{\g_n\o \Phi^n}(\Hcal^2) }\frac{\nu_1( \g_n\o W_n)+\nu_2(\g_n\o W_n)}{\W_{\dtbn}(\nu_1,\nu_2)}\ge c>0.
\end{align*}
As a consequence, we infer the existence of $C_n^*=C_n^*(K_2,\beta,R,\rho_1)>0$ such that $N$ given by
\begin{align*}
N:=\g_n^{-1}\Big( C_n^* \Big(\tfrac{\nu_1( \g_n\o W_n)+\nu_2(\g_n\o W_n)}{\W_{\dtbn}(\nu_1,\nu_2)}\Big)^{\frac{4}{3}} \Big),
\end{align*}
satisfies~\eqref{cond:N:1}, \eqref{cond:N:2}, \eqref{cond:N:3} as well as the requirement $N>R$. Here, we emphasieze that $C^*_n$ is independent of $\nu_1$ and $\nu_2$. Finally, it follows from~\eqref{cond:N:3} that~\eqref{ineq:W_dtilde:contracting:t_0} holds. The proof is thus complete.

\end{proof}

\begin{remark} \label{rem:d<psi(W)}
In view of the estimate \eqref{ineq:d<psi(W)}, we see that the proof of Lemma~\ref{lem:W_dtilde:contracting:t_0} is still valid for any other distance $\dt$ as long as it satisfies Proposition~\ref{prop:ergodicity} and is dominated by $\g_n(W_n)$ in the following sense 
\begin{align*}
\dt(U,\Ut)\le c\big(1+\beta\g_n(W_n(U) + W_n(\Ut) )\big)^\varepsilon,
\end{align*}
for some constants $\varepsilon\in(0,1)$ and $c>0$ independent of $U,\Ut$.
\end{remark}

Having established the contracting property of $P_{t_0}$ with respect to $\W_{\dtbn}$ in one step, we upgrade~\eqref{ineq:W_dtilde:contracting:t_0} to a polynomial decaying bound for a sequence of suitably chosen time steps.

\begin{lemma} \label{lem:W_dtilde:contracting:m_k}
Given $\nu_1,\nu_2\in \Pcal_{\g_n\o \Phi^n}(\Hcal^2)$, suppose that $\{m_k\}_{k\ge 1}$ is a sequence of strictly increasing positive integers satisfying
\begin{align} \label{cond:m_k}
P_{m_kt_0}\nu_1(\g_n\o W_n)+P_{m_kt_0}\nu_1(\g_n\o W_n)\le C(\nu_1,\nu_2), \quad k\ge 1,
\end{align}
for some positive constant $C(\nu_1,\nu_2)$ independent of $k$. Let $\dtbn$ be the distance--like function from Lemma~\ref{lem:W_dtilde:contracting:t_0}. Then, the following holds
\begin{align} \label{ineq:W_dtilde:contracting:m_k}
\W_{\dtbn}\big(P_{m_kt_0}\nu_1,P_{m_kt_0}\nu_2\big)
&\le c_nC(\nu_1,\nu_2) k^{-\frac{3(n-1+\gamma)}{4(1-\gamma)}},
\end{align}
for some positive constant $c_n$ independent of $k$, $\nu_1$ and $\nu_2$.
\end{lemma}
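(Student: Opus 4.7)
The plan is to iterate the one-step estimate~(\ref{ineq:W_dtilde:contracting:t_0}) along the subsequence $\{m_kt_0\}_{k\ge 1}$ and convert the resulting recursion into a polynomial bound. Throughout, write $\xi_k:=\W_{\dtbn}(P_{m_kt_0}\nu_1,P_{m_kt_0}\nu_2)$, $\nu_i^{(k)}:=P_{m_kt_0}\nu_i$ for $i=1,2$, and $C^*:=C(\nu_1,\nu_2)$.

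Since the bracketed factor in~(\ref{ineq:W_dtilde:contracting:t_0}) is at most $1$, $P_{t_0}$ is non-expansive with respect to $\W_{\dtbn}$; combined with the semigroup property and $m_{k+1}>m_k$, this yields the monotonicity
\begin{align*}
\xi_{k+1}\le \W_{\dtbn}\big(P_{t_0}\nu_1^{(k)},P_{t_0}\nu_2^{(k)}\big).
\end{align*}
I would then apply Lemma~\ref{lem:W_dtilde:contracting:t_0} to $\nu_1^{(k)},\nu_2^{(k)}$, using~(\ref{cond:m_k}) to bound $\nu_1^{(k)}(\g_n\o W_n)+\nu_2^{(k)}(\g_n\o W_n)\le C^*$, and use the explicit identity $\g_n'(\g_n^{-1}(y))=\tfrac{n-1+\gamma}{n}\,y^{-\frac{1-\gamma}{n-1+\gamma}}$ coming from $\g_n(x)=x^{(n-1+\gamma)/n}$. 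Setting $\alpha:=\tfrac{4(1-\gamma)}{3(n-1+\gamma)}$ (so that $1/\alpha$ matches the target exponent $\tfrac{3(n-1+\gamma)}{4(1-\gamma)}$), this plugging-in produces the closed-form recursion
\begin{align*}
\xi_{k+1}\le \xi_k-c_n''\,(C^*)^{-\alpha}\,\xi_k^{1+\alpha},\qquad k\ge 1,
\end{align*}
with $c_n''>0$ depending only on $n,\gamma,c_n^*,C_n^*$.

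To extract the rate, I would pass to $\eta_k:=\xi_k^{-\alpha}$ and invoke the elementary convex inequality $(1-x)^{-\alpha}\ge 1+\alpha x$ for $x\in[0,1)$, which gives $\eta_{k+1}\ge \eta_k+\alpha c_n''(C^*)^{-\alpha}$, hence by iteration $\eta_k\ge \alpha c_n''(C^*)^{-\alpha}(k-1)$ and
\begin{align*}
\xi_k\le c'_n\,C^*\,(k-1)^{-1/\alpha},\qquad k\ge 2.
\end{align*}
Absorbing $(k-1)^{-1/\alpha}\le 2^{1/\alpha}k^{-1/\alpha}$ (for $k\ge 2$) into the constant yields~(\ref{ineq:W_dtilde:contracting:m_k}); the remaining case $k=1$ is handled by the trivial bound $\xi_1\le c\cdot C^*$ coming from~(\ref{cond:d-vs-psi_n(W_n)}).

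The main technical point I expect to need attention is making the recursion close cleanly: one must ensure that the contraction factor $1-c_n^*\g_n'(\g_n^{-1}(\cdot))$ lies in $(0,1)$ in the regime where the convex-inequality step is used, and that $\xi_1$ is finite so that the iteration starts. Both are straightforward: the first holds automatically since $\g_n'$ is positive and vanishes at infinity, so by shrinking $c_n^*$ if necessary (and absorbing the change into $c_n''$) the factor is in $(0,1)$ whenever $\xi_k$ is below a fixed threshold --- and above that threshold, the pure monotonicity bound $\xi_{k+1}\le \xi_k$ brings us into the contracting regime after finitely many steps (which again only costs a constant in the final bound). The finiteness of $\xi_1$ follows from $\nu_1,\nu_2\in\Pcal_{\g_n\o\Phi^n}(\Hcal^2)$ together with~(\ref{cond:d-vs-psi_n(W_n)}) established inside the proof of Lemma~\ref{lem:W_dtilde:contracting:t_0}.
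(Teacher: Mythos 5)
Your argument is correct and follows essentially the same route as the paper: iterate the one-step estimate~\eqref{ineq:W_dtilde:contracting:t_0} along $\{m_k\}$, use~\eqref{cond:m_k} together with the monotonicity of $\g_n^{-1}$ and $\g_n'$ to close the recursion $\xi_{k+1}\le \xi_k-c_n''(C^*)^{-\alpha}\xi_k^{1+\alpha}$, and then extract the rate. The only difference is in the last step: the paper appeals to the comparison lemma of Butkovsky (computing the integral $g_n$ explicitly), whereas you solve the recursion by hand via $\eta_k=\xi_k^{-\alpha}$ and the tangent-line inequality, which is an equivalent and slightly more self-contained resolution.
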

\begin{proof} The argument is similarly to that of \cite[Lemma 4.4]{butkovsky2014subgeometric} while making use of~\eqref{cond:d-vs-psi_n(W_n)}. 

From~\eqref{cond:d-vs-psi_n(W_n)}, observe that
\begin{align*}
\W_{\dtbn}(\nu_1,\nu_2)&\le c\Big( 1+\tfrac{3}{4}\beta\big[\nu_1(\g_n\o W_n)+\beta\nu_2(\g_n\o W_n ) \big]\Big)\\
&\le c\big[\nu_1(\g_n\o W_n)+\nu_2(\g_n\o W_n ) \big].
\end{align*}
In the last implication above, $c_*>0$ is a positive constant independent of $\nu_1,\nu_2\in \Pcal_{\g_n\o \Phi^n}(\Hcal^2)$.  As a consequence of~\eqref{cond:m_k}, we obtain
\begin{align} \label{ineq:W_dtilde:contracting:m_k:a}
\W_{\dtbn}\big(P_{m_{k}t_0}\nu_1,P_{m_kt_0}\nu_2\big) \le c_*\big[P_{m_kt_0}\nu_1(\g_n\o W_n)+P_{m_kt_0}\nu_2(\g_n\o W_n ) \big]\le c_*\cdot C(\nu_1,\nu_2).
\end{align}

Turning to~\eqref{ineq:W_dtilde:contracting:m_k}, we set
\begin{align*}
a_{k}=\frac{\W_{\dtbn}\big(P_{m_kt_0}\nu_1,P_{m_kt_0}\nu_2\big)}{c_*C(\nu_1,\nu_2)}.
\end{align*}
In view of~\eqref{ineq:W_dtilde:contracting:t_0}, it holds that
\begin{align*}
a_{k+1} &\le \Big[ 1-c_n^*\g_n'\Big(\g_n^{-1}\Big( C_n^* \Big[\frac{ P_{m_kt_0}\nu_1( \g_n\o W_n)+P_{m_kt_0}\nu_2(\g_n\o W_n)}{c_*C(\nu_1,\nu_2)}\Big]^{\frac{4}{3}} a_{k}^{-\frac{4}{3}} \Big) \Big)\Big]
a_{k}\\
&\le \Big[ 1-c_n^*\g_n'\Big(\g_n^{-1}\Big( C_n^*  a_{k}^{-\frac{4}{3}} \Big) \Big)\Big]
a_{k},
\end{align*}
where in the last implication above, we invoked~\eqref{ineq:W_dtilde:contracting:m_k:a} with the fact that $\g_n^{-1}$ is increasing whereas $\g_n'$ is decreasing. In light of \cite[Lemma 4.2]{butkovsky2014subgeometric}, we deduce
\begin{align*}
a_k\le g_n^{-1}(k),
\end{align*}
where
\begin{align*}
g_n(x)=\int_x^1\frac{1}{t \cdot c_n^*\g_n'\Big(\g_n^{-1}\Big( C_n^*  t^{-\frac{4}{3}} \Big)\Big)}\d t,\quad 0<x\le 1.
\end{align*}
By making a change of variable $y:=\g_n^{-1}( C_n^*  t^{-\frac{4}{3}} )$, we deduce (recalling $\g_n$ as in~\eqref{form:psi_n})
\begin{align*}
g_n(x)=\frac{3}{4c_n^*(1-p_n)}\Big(  \big(C_n^*x^{-\frac{4}{3}}\big)^{\frac{1-p_n}{p_n}}  -(C_n^*)^{\frac{1-p_n}{p_n}}\Big),
\end{align*}
where $p_n=\frac{n-1+\gamma}{n}$. It follows that
\begin{align*}
a_{k}\le c (k+1)^{-\frac{3p_n}{4(1-p_n)}} ,\quad k\ge 1.
\end{align*}
This establishes~\eqref{ineq:W_dtilde:contracting:m_k}, as claimed.

\end{proof}

Next, we construct a sequence $\{m_k\}$ satisfying~\eqref{cond:m_k} in such a way that they diverge to infinity not much faster than $k$.

\begin{lemma} \label{lem:m_k}
given $U\in\Hcal^2$, define $m_0=0$ and for $k\ge 1$
\begin{align} \label{form:m_k}
m_k=\inf\big\{m>m_{k-1}:P_{mt_0}(\g_n\o K_1W_n)(U)\le 2K_2+W_n(U) \big  \}.
\end{align}
Then, for all $k\ge 1$, $m_k\le 2k$. Furthermore, 
\begin{align} \label{ineq:m_k}
P_{m_kt_0}\delta_U(\g_n\o W_n) +P_{m_kt_0}\nu(\g_n\o W_n)\le  \frac{3K_2+W_n(U)}{\g_N(K_1)},
\end{align}
where $\nu$ is the unique invariant measure of $P_t$.
\end{lemma}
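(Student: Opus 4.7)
The plan is to combine the one--step Lyapunov bound \eqref{cond:W_n} with a pigeonhole argument to obtain the upper bound $m_k \le 2k$, and then to read off \eqref{ineq:m_k} directly from the defining inequality for $m_k$ together with an invariance--based bound on $\nu(\g_n \o W_n)$.

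First, I would telescope \eqref{cond:W_n} to obtain, for every $M \ge 1$,
\begin{align*}
P_{Mt_0} W_n(U) \le W_n(U) - \sum_{m=0}^{M-1} P_{mt_0}(\g_n \o K_1 W_n)(U) + MK_2,
\end{align*}
and, using $W_n \ge 1 \ge 0$, rearrange this into the master occupation--time inequality
\begin{align} \label{ineq:master-sum-proposal}
\sum_{m=0}^{M-1} P_{mt_0}(\g_n \o K_1 W_n)(U) \le W_n(U) + M K_2.
\end{align}

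To prove $m_k \le 2k$, I would argue by contradiction. If $m_k \ge 2k+1$, then $m_1 < m_2 < \dots < m_{k-1}$ are $k-1$ distinct elements of $\{1,\dots,2k\}$, so the complement $\mathcal{B} := \{1,\dots,2k\} \setminus \{m_1,\dots,m_{k-1}\}$ has at least $k+1$ elements. For each $m \in \mathcal{B}$ the definition \eqref{form:m_k} forces $P_{mt_0}(\g_n \o K_1 W_n)(U) > 2K_2 + W_n(U)$, and summing these together with \eqref{ineq:master-sum-proposal} taken at $M = 2k+1$ yields
\begin{align*}
(k+1)\bigl(2K_2 + W_n(U)\bigr) < \sum_{m=1}^{2k} P_{mt_0}(\g_n \o K_1 W_n)(U) \le W_n(U) + (2k+1)K_2,
\end{align*}
which simplifies to $K_2 + kW_n(U) < 0$. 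Since $K_2 > 0$ and $W_n \ge 1$, this is absurd, so $m_k \le 2k$.

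For \eqref{ineq:m_k}, I would exploit the homogeneity $\g_n(K_1 x) = \g_n(K_1)\g_n(x)$ (evident from \eqref{form:psi_n}) to rewrite the defining condition on $m_k$ as
\begin{align*}
P_{m_k t_0}(\g_n \o W_n)(U) \le \frac{2K_2 + W_n(U)}{\g_n(K_1)}.
\end{align*}
For the invariant--measure piece, I would first verify $\nu(W_n) < \infty$ via \eqref{cond:W_n-vs-Phi_n} together with the higher--regularity moment bounds of Proposition \ref{prop:moment-bound}, then integrate \eqref{cond:W_n} against $\nu$ and cancel $\nu(W_n)$ on both sides by invariance to obtain $\g_n(K_1)\nu(\g_n \o W_n) \le K_2$. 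Since $P_{m_k t_0}\nu = \nu$, adding the two bounds gives \eqref{ineq:m_k}.

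The only real subtlety is justifying $\nu(W_n) < \infty$ before cancelling against $\nu$; this is precisely where the fact that $\nu$ charges $\Hcal^2$ and possesses arbitrary polynomial moments there (Proposition \ref{prop:moment-bound}) enters crucially. Beyond that, the argument is purely a counting estimate built on top of \eqref{ineq:master-sum-proposal}.
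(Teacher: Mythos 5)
Your proposal is correct and follows essentially the same route as the paper: the telescoped occupation--time bound from \eqref{cond:W_n}, the pigeonhole/contradiction count on $\{1,\dots,2k\}\setminus\{m_1,\dots,m_{k-1}\}$, and the combination of the defining inequality for $m_k$ with the invariance identity $\nu(\g_n\circ K_1W_n)\le K_2$ (justified by the finiteness of $\nu(W_n)$ via \eqref{cond:W_n-vs-Phi_n} and Proposition~\ref{prop:moment-bound}) together with the homogeneity $\g_n(K_1x)=\g_n(K_1)\g_n(x)$.
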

\begin{proof}
From~\eqref{cond:W_n}, we note that for $U\in\Hcal^2$
\begin{align*}
P_{it_0}(\g_n\o K_1 W_n)(U)\le P_{it_0}W_n(U)-P_{(i+1)t_0}W_n(U)+K_2,\quad i=0,1,2,\cdots
\end{align*}
It follows that
\begin{align} \label{ineq:m_k:a}
\sum_{i=0}^{2k} P_{it_0}(\g_n\o K_1 W_n)(U) \le W_n(U)+(2k+1)K_2.
\end{align}
Now suppose by contradiction that $m_k\ge 2k+1$. Observe that
\begin{align*}
\text{card}\big(\{1,\dots,2k\}\setminus\{m_1,\dots,m_{k-1}\}\big)\ge k+1.
\end{align*}
By the definition of $m_k$, we infer
\begin{align*}
\sum_{i=0}^{2k} P_{it_0}(\g_n\o K_1W_n)(U) \ge (k+1)(2K_2+W_n(U)),
\end{align*}
which contradicts~\eqref{ineq:m_k:a}. We therefore conclude that $m_k\le 2k$.

Turning to~\eqref{ineq:m_k}, since $\nu$ has finite $p$--moments in $\Hcal^2$ for all $p>0$ (see \eqref{ineq:moment-bound:nu}), we deduce from~\eqref{cond:W_n} while making use of invariance that
\begin{align*}
\nu(\g_n\o K_1W_n)\le K_2.
\end{align*}
In particular, this implies that
\begin{align*}
P_{m_kt_0}\nu(\g_n\o K_1W_n)\le K_2,
\end{align*}
i.e.,
\begin{align*}
P_{m_kt_0}\nu(\g_n\o W_n)\le \frac{K_2}{\g_n(K_1)}.
\end{align*}
On the other hand, we invoke the definition of $m_k$ in~\eqref{form:m_k} to see that
\begin{align*}
P_{m_kt_0}\delta_U(\g_n\o K_1 W_n) = P_{m_kt_0}(\g_n\o K_1W_n)(U)\le 2K_2+W_n(U).
\end{align*}
Hence,
\begin{align*}
P_{m_kt_0}\delta_U(\g_n\o W_n)+P_{m_kt_0}\nu(\g_n\o W_n)\le \frac{3K_2+W_n(U)}{\g_N(K_1)},
\end{align*}
thereby finishing the proof.
\end{proof}

Having obtained the required auxiliary results, we are now in a position to conclude Theorem~\ref{thm:polynomial-mixing}. See also \cite[Theorem 2.4]{butkovsky2014subgeometric}.

\begin{proof}[Proof of Theorem~\ref{thm:polynomial-mixing}]
Fixing $U\in\Hcal^2$, let $\{m_k\}_{k\ge 1}$ be the sequence defined in~\eqref{form:m_k}. In view of~\eqref{ineq:m_k}, the pair of probability measures $(\delta_U,\nu)$ satisfies condition~\eqref{cond:m_k} with $C(\delta_U,\nu)=3K_2+W_n(U)$. As a consequence, we infer from~\eqref{ineq:W_dtilde:contracting:m_k} the existence of a positive constant $c$ independent of $k$ and $U$ such that
\begin{align*}
\W_{\dtbn}\big(P_{m_kt_0}\delta_U,\nu\big)
&\le c(1+W_n(U)) (k+1)^{-\frac{3(n-1+\gamma)}{4(1-\gamma)}}\\
&\le c(1+\Phi(U)^n) (k+1)^{-\frac{3(n-1+\gamma)}{4(1-\gamma)}}.
\end{align*}
In the last implication above, we invoke the fact that $W_n$ is dominated by $\Phi^n$ (see \eqref{cond:W_n-vs-Phi_n}). Also, since $m_k\le 2k$ by virtue of Lemma~\ref{lem:m_k}, we invoke~\eqref{ineq:W_dtilde:contracting:t_0} to see that
\begin{align*}
\W_{\dtbn}\big(P_{2kt_0}\delta_U,\nu\big) \le \W_{\dtbn}\big(P_{m_kt_0}\delta_U,\nu\big)\le c(1+\Phi(U)^n) (k+1)^{-\frac{3(n-1+\gamma)}{4(1-\gamma)}},\quad k\ge 1.
\end{align*}
To deduce a bound in term of $\W_d$, we note that for $n\ge 3$,
\begin{align*}
d(U,\Ut)&\le c\Psitwo(U)+c\Psitwo(\Ut)    \\
&\le c+c(\Psitwo(U)+\Psitwo(\Ut))^{\frac{(n-1+\gamma)^2}{n}}     \\
&\le c\big(1+\beta \g_n( W_n(U)+ W_n(\Ut))\big),
\end{align*}
whence $\W_{d}(\nu_1,\nu_2)\le c\W_{\dtbn}(\nu_1,\nu_2)$. It follows that
\begin{align*}
\W_{d}\big(P_{2kt_0}\delta_U,\nu\big) \le c(1+\Phi(U)^n) (k+1)^{-\frac{3(n-1+\gamma)}{4(1-\gamma)}},\quad k\ge 1.
\end{align*}
So, for all $t\ge 2t_0$, we deduce
\begin{align*}
\W_{d}\big(P_{t}\delta_U,\nu\big) \le \W_{d}\big(P_{[\frac{t}{2t_0}]}\delta_U,\nu\big) & \le c(1+\Phi(U)^n) ([\tfrac{t}{2t_0}]+1)^{-\frac{3(n-1+\gamma)}{4(1-\gamma)}}\\
&\le  c(1+\Phi(U)^n) (t+1)^{-\frac{3(n-1+\gamma)}{4(1-\gamma)}}.
\end{align*}
This establishes~\eqref{ineq:polynomial-mixing} for $T_*=2t_0$, thereby finishing the proof.

\end{proof}

\subsection{Proof of Proposition~\ref{prop:ergodicity}}
\label{sec:ergodicity:auxiliary}

Turning to Proposition~\ref{prop:ergodicity}, we will employ Lemma~\ref{lem:moment-bound:ubar} to established the required contracting property and $d$--small sets. As an intermediate step, though, in Lemma~\ref{lem:contracting:W_d} below, we assert an estimate analogous to~\eqref{ineq:moment-bound:|ubar|:exponential-decay} in term of $\W_{d}\big( P_t(U_0,\cdot),P_t(\Ut_0,\cdot) \big) $. 
\begin{lemma} \label{lem:contracting:W_d}
 For all $t\ge0$, and $U_0,\Ut_0\in\Hcal^1$,
\begin{align}
\label{ineq:contracting:W_d(U_0,Utilde_0)<|U_0-Utilde_0|.e^(U_0+Utilde_0)}
&\W_{d}\big( P_t(U_0,\cdot),P_t(\Ut_0,\cdot) \big)   \nt \\
& \le \|U_0-\Ut_0\|_{\Hcal^1}^2\big( 1+C\varepsilon\big)\exp\Big\{\varepsilon^2C\big(\|U_0\|^2_{\Hcal^1}+\|\Ut_0)\|^2_{\Hcal^1}\big)- \tfrac{1}{8}\ve t\Big\},
\end{align}
holds for some positive constants $C$ independent of $\ve$, $t$, $U_0$ and $\Ut_0$.

\end{lemma}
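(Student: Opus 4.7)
The plan is to exploit the synchronous coupling of the two solutions, reducing the Wasserstein estimate to a pointwise bound that is already almost contained in Lemma~\ref{lem:moment-bound:ubar}. Write $U(t;U_0) = (u(t),v(t))$ and $U(t;\Ut_0) = (\ut(t),\vt(t))$ for the solutions of~\eqref{eqn:wave} driven by the same cylindrical Wiener process $w(t)$, with respective initial data $U_0,\Ut_0\in\Hcal^1$. Since $(U(t;U_0),U(t;\Ut_0))$ is by construction a coupling of $(P_t(U_0,\cdot),P_t(\Ut_0,\cdot))$, the definition~\eqref{form:W_d} of the Wasserstein distance together with~\eqref{form:d} immediately yields
\begin{align*}
\W_d\bigl(P_t(U_0,\cdot),P_t(\Ut_0,\cdot)\bigr) \;\le\; \E\, d\bigl(U(t;U_0),U(t;\Ut_0)\bigr) \;=\; \E\bigl[\|u(t)-\ut(t)\|_{H^1}^2+\|v(t)-\vt(t)\|_H^2\bigr].
\end{align*}

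The remaining task is therefore to relate the right--hand side above to the ``twisted'' functional appearing in~\eqref{ineq:moment-bound:|ubar|:exponential-decay}, namely
\begin{align*}
G_\varepsilon(u,v;\ut,\vt) := \|u-\ut\|_{H^1}^2+\|v-\vt\|_H^2+\varepsilon\la u-\ut,v-\vt\ra_H.
\end{align*}
By Cauchy--Schwarz and the Poincar\'e--type inequality $\|u-\ut\|_H\le \alpha_1^{-1/2}\|u-\ut\|_{H^1}$, we have
\begin{align*}
\bigl|\varepsilon\la u-\ut,v-\vt\ra_H\bigr| \;\le\; \tfrac{\varepsilon}{2}\bigl(\alpha_1^{-1}\|u-\ut\|_{H^1}^2+\|v-\vt\|_H^2\bigr),
\end{align*}
so that for all $\varepsilon$ sufficiently small there is an absolute constant $C>0$ with
\begin{align*}
(1-C\varepsilon)\bigl(\|u-\ut\|_{H^1}^2+\|v-\vt\|_H^2\bigr) \;\le\; G_\varepsilon \;\le\; (1+C\varepsilon)\bigl(\|u-\ut\|_{H^1}^2+\|v-\vt\|_H^2\bigr).
\end{align*}
Applying the lower bound at time $t$ and the upper bound at time $0$, combined with~\eqref{ineq:moment-bound:|ubar|:exponential-decay}, gives
\begin{align*}
(1-C\varepsilon)\,\E\|U(t;U_0)-U(t;\Ut_0)\|_{\Hcal^1}^2 \;\le\; (1+C\varepsilon)\|U_0-\Ut_0\|_{\Hcal^1}^2(1+C\varepsilon^2)\exp\Bigl\{\varepsilon^2 C(\|U_0\|_{\Hcal^1}^2+\|\Ut_0\|_{\Hcal^1}^2)-\tfrac{1}{8}\varepsilon t\Bigr\}.
\end{align*}
Dividing by $1-C\varepsilon$ and absorbing the resulting factor into the constant $C$ (which is possible for $\varepsilon$ small) produces exactly the claimed estimate~\eqref{ineq:contracting:W_d(U_0,Utilde_0)<|U_0-Utilde_0|.e^(U_0+Utilde_0)}.

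The main (and essentially the only non--bookkeeping) step is the equivalence of $G_\varepsilon$ with the genuine $\Hcal^1$--seminorm for small $\varepsilon$; everything else is a direct invocation of Lemma~\ref{lem:moment-bound:ubar} under the synchronous coupling. I do not anticipate serious difficulty, since the cross term $\varepsilon\la u-\ut,v-\vt\ra_H$ is a lower--order perturbation controlled by Poincar\'e.
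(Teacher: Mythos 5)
Your proposal is correct and follows essentially the same route as the paper: a synchronous coupling reduces $\W_d$ to $\E\|U(t)-\Ut(t)\|_{\Hcal^1}^2$, and the twisted functional from~\eqref{ineq:moment-bound:|ubar|:exponential-decay} is shown to be equivalent to the $\Hcal^1$--norm up to factors $1\pm C\varepsilon$ via Cauchy--Schwarz and Poincar\'e, exactly as in the paper's argument. The only difference is notational (the paper writes the equivalence constants explicitly as $\varepsilon/(2\max\{\alpha_1,1\})$).
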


For the sake of clarity, we defer the proof of Lemma~\ref{lem:contracting:W_d} to the end of this section. We now provide the proof of Proposition~\ref{prop:ergodicity}.

\begin{proof}[Proof of Proposition~\ref{prop:ergodicity}]
1. The estimate~\eqref{ineq:Lyapunov:Phi_n} is the same as~\eqref{ineq:moment-bound:H^2}, established in Lemma~\ref{lem:moment-bound:H^2}.

2. With regard to~\eqref{ineq:contracting:W_d(U_0,Utilde_0)<d(U_0,Utilde_0)}, let $U(t)$ and $\Ut(t)$ respectively be the solutions of~\eqref{eqn:wave} with initial conditions $U_0$ and $\Ut_0$. In view of the expression~\eqref{form:W_d}, we see that
\begin{align*}
\W_{d}\big( P_t(U_0,\cdot),P_t(\Ut_0,\cdot) \big) \le \E\,d\big(U(t),\Ut(t)\big) = \E\|U(t)-\Ut(t)\|_{\Hcal^1}^2 \le \|U_0-\Ut_0\|_{\Hcal^1}^2.
\end{align*}
In the last implication above, we employed~\eqref{ineq:moment-bound:|ubar|^2<|u_0|^2}. The estimate~\eqref{ineq:contracting:W_d(U_0,Utilde_0)<d(U_0,Utilde_0)}
now follows from the above inequality.

3. Turning to~\eqref{ineq:contracting:W_d(U_0,Utilde_0)<(1-rho)d(U_0,Utilde_0)}, we note that $\Psitwo$ as in~\eqref{form:Phi} satisfies 
\begin{align*}
\Psitwo(u,v)&=  \|u\|^2_{H^2}+\|v\|^2_{H^1}+\la u,v\ra_{H^1} +\tfrac{1}{2}\|u\|^2_{H^1}+\|\f(v)\|_{L^1}    \\
&\ge \tfrac{1}{2}\alpha_1\big(\|u\|^2_{H^1}+\|v\|^2_H\big),
\end{align*}
where $\alpha_1$ is the first eigenvalue of $A$ as in~\eqref{eqn:Ae_k=alpha_k.e_k}. Hence, $U\in B^R_{n}$ implies $\|U\|_{\Hcal^1}^{2n}\le 2R/\alpha_1$. 

Now, 
in view of~\eqref{ineq:contracting:W_d(U_0,Utilde_0)<|U_0-Utilde_0|.e^(U_0+Utilde_0)}, for all $\varepsilon$ sufficiently small, it holds that
\begin{align*}
&\W_{d}\big( P_t(U_0,\cdot),P_t(\Ut_0,\cdot) \big)   \nt \\
& \le \|U_0-\Ut_0\|_{\Hcal^1}^2\big( 1+C\varepsilon\big)\exp\Big\{\varepsilon^2C\big(\|U_0\|^2_{\Hcal^1}+\|\Ut_0)\|^2_{\Hcal^1}\big)- \tfrac{1}{8}\ve t\Big\}\\
&\le  \|U_0-\Ut_0\|_{\Hcal^1}^2\big( 1+C\varepsilon\big)\exp\Big\{C\varepsilon^2R^{1/n}- \tfrac{1}{8}\ve t\Big\}.
\end{align*}
In the above, we emphasize that $C>0$ is independent of $\ve,\,R,\,t$, $U_0$ and $\Ut_0$. We now pick $\varepsilon$ small and $t_0$ large enough such that
\begin{align*}
C R^{1/n}\ve <\tfrac{1}{16},\quad\text{and}\quad t_0\ge \max\{16C,1\}.
\end{align*}
It follows that for $t\ge t_0$
\begin{align*}
( 1+C\varepsilon)\exp\Big\{C\varepsilon^2R^{1/n}- \tfrac{1}{8}\ve t\Big\} \le ( 1+C\varepsilon)\exp\big\{- \tfrac{1}{16}\ve t\big\}<1,
\end{align*}
whence 
\begin{align*}
\W_{d}\big( P_t(U_0,\cdot),P_t(\Ut_0,\cdot) \big) \le (1-\rho)\|U_0-\Ut_0\|_{\Hcal^1}^2,
\end{align*}
where $1-\rho=( 1+C\varepsilon)\exp\big\{- \tfrac{1}{16}\ve t\big\}\in(0,1)$. 
This produces~\eqref{ineq:contracting:W_d(U_0,Utilde_0)<(1-rho)d(U_0,Utilde_0)}, thereby finishing the proof.
\end{proof}

Lastly, we provide the proof of Lemma~\ref{lem:contracting:W_d}, which ultimately concludes the proof of Theorem~\ref{thm:polynomial-mixing}.

\begin{proof}[Proof of Lemma~\ref{lem:contracting:W_d}]
For all $\varepsilon>0$ sufficiently small, by Sobolev embedding and Cauchy--Schwarz inequality, we note that
\begin{align}
\big(1-\tfrac{\varepsilon}{2\max\{\alpha_1,1\}}\big)\big(\|u\|^2_{H^1}+\|v\|^2_{H}\big)&\le 
\|u\|^2_{H^1}+\|v\|^2_{H}+\varepsilon\la u,v\ra_H  \nt \\
&\le \big(1+\tfrac{\varepsilon}{2\max\{\alpha_1,1\}}\big)\big(\|u\|^2_{H^1}+\|v\|^2_{H}\big), \label{ineq:(u,v)}
\end{align}
where $\alpha_1$ is the first eigenvalue of $A$ as in~\eqref{eqn:Ae_k=alpha_k.e_k}. Next, let $U(t)$ and $\Ut(t)$ respectively be the solutions of~\eqref{eqn:wave} with initial conditions $U_0$ and $\Ut_0$. Recall from~\eqref{ineq:moment-bound:|ubar|:exponential-decay} that
\begin{align*} 
&\E\Big[\|(u(t)-\ut(t),v(t)-\vt(t))\|^2_{\Hcal^1}+\varepsilon\la u(t)-\ut(t),v(t)-\vt(t)\ra_H  \Big] \\
&\le \big(\|(u_0-\ut_0,v_0-\vt_0)\|^2_{\Hcal^1}+\varepsilon\la u_0-\ut_0,v_0-\vt_0\ra_H \big) \\
&\qquad \times \big( 1+C\varepsilon^2\big)\exp\Big\{\varepsilon^2C\big(\|(u_0,v_0)\|^2_{\Hcal^1}+\|(\ut_0,\vt_0)\|^2_{\Hcal^1}\big)- \tfrac{1}{8}\ve t\Big\},\quad t\ge 0.
\end{align*}
This together with~\eqref{ineq:(u,v)} implies
\begin{align*}
&\E\|(u(t)-\ut(t),v(t)-\vt(t))\|_{\Hcal^1}^2\\
 &\le \|U_0-\Ut_0\|_{\Hcal^1}^2\big(1+\tfrac{\varepsilon}{2\max\{\alpha_1,1\}}\big)/\big(1-\tfrac{\varepsilon}{2\max\{\alpha_1,1\}}\big) \\
 &\qquad\times\big( 1+C\varepsilon^2\big)\exp\Big\{\varepsilon^2C\big(\|U_0\|^2_{\Hcal^1}+\|\Ut_0)\|^2_{\Hcal^1}\big)- \tfrac{1}{8}\ve t\Big\},
 \end{align*}
 whence (by taking $\varepsilon$ small enough)
 \begin{align}
 &\E\|(u(t)-\ut(t),v(t)-\vt(t))\|_{\Hcal^1}^2 \nt \\
 &\le \|U_0-\Ut_0\|_{\Hcal^1}^2\big( 1+C\varepsilon\big)\exp\Big\{\varepsilon^2C\big(\|U_0\|^2_{\Hcal^1}+\|\Ut_0)\|^2_{\Hcal^1}\big)- \tfrac{1}{8}\ve t\Big\}, \label{ineq:contracting:E(ubar,vbar)}
\end{align}
for some positive constant $C$ independent of $\ve$, $t$, $U_0$ and $\Ut_0$.

Turning back to~\eqref{ineq:contracting:W_d(U_0,Utilde_0)<|U_0-Utilde_0|.e^(U_0+Utilde_0)}, we invoke~\eqref{ineq:contracting:E(ubar,vbar)} together with~\eqref{form:W_d} to obtain
\begin{align*}
\W_{d}\big( P_t(U_0,\cdot),P_t(\Ut_0,\cdot) \big)  &\le \E\|(u(t)-\ut(t),v(t)-\vt(t))\|_{\Hcal^1}^2 \\
&\le  \|U_0-\Ut_0\|_{\Hcal^1}^2\big( 1+C\varepsilon\big)\exp\Big\{\varepsilon^2C\big(\|U_0\|^2_{\Hcal^1}+\|\Ut_0)\|^2_{\Hcal^1}\big)- \tfrac{1}{8}\ve t\Big\}.
\end{align*}
This establishes~\eqref{ineq:contracting:W_d(U_0,Utilde_0)<|U_0-Utilde_0|.e^(U_0+Utilde_0)}, as claimed.

\end{proof}

\section*{Acknowledgment}
The author would like to thank the anonymous reviewer for their valuable comments and suggestions.

\bibliographystyle{abbrv}
{\footnotesize\bibliography{wave-bib}}

\end{document}